\Crefname{theorem}{Theorem}{Theorems}
\Crefname{lemma}{Lemma}{Lemmas}
\Crefname{corollary}{Corollary}{Corollaries}
\Crefname{proposition}{Proposition}{Propositions}
\Crefname{definition}{Definition}{Definitions}
\Crefname{remark}{Remark}{Remarks}
\Crefname{section}{Section}{Sections}
\Crefname{equation}{Equation}{Equations}
\declaretheorem[name=Theorem, numberwithin=section]{theorem}
\declaretheorem[name=Lemma, sibling=theorem]{lemma}
\declaretheorem[name=Corollary, sibling=theorem]{corollary}
\declaretheorem[name=Proposition, sibling=theorem]{proposition}
\declaretheorem[name=Definition, style=definition, sibling=theorem]{definition}
\declaretheorem[name=Example, sibling=theorem]{example}
\declaretheorem[name=Remark, style=remark, sibling=theorem]{remark}
\DeclarePairedDelimiter{\abs}{\lvert}{\rvert}
\DeclarePairedDelimiter{\norm}{\lVert}{\rVert}
\DeclarePairedDelimiterX{\PParens}[1]{(}{)}{#1}
\DeclarePairedDelimiterX{\EBrackets}[1]{[}{]}{#1}
\RenewDocumentCommand{\Pr}{g}{
  \mathsf{P}\IfValueT{#1}{\PParens{#1}}
}
\newcommand{\E}{\mathsf{E}\EBrackets}
\newcommand{\Var}{\mathsf{Var}\EBrackets}
\newcommand{\R}{\mathbb{R}}
\newcommand{\Nzero}{\mathbb{N}_0}
\newcommand{\PP}{\mathcal{P}}
\newcommand{\stirlingii}[2]{\genfrac\{\}{0pt}{}{#1}{#2}}
\newcommand{\Craw}[1]{C^{\mathrm{raw}}_{#1}}         
\newcommand{\Ccen}[1]{C^{\mathrm{cen}}_{#1}}         
\newcommand{\Csym}[1]{C^{\mathrm{sym}}_{#1}}
\title{\bfseries Explicit Universal Bounds for Cumulants via Moments}
\author{Jiechen Zhang\footnote{EPFL. Email: \href{mailto:jiechen.zhang@epfl.ch}{jiechen.zhang@epfl.ch}.}}
\date{}
\begin{document}

\maketitle

\begin{abstract}
We establish explicit, universal, and distribution-free bounds for the $n$-th cumulant, $\kappa_n(X)$, of a scalar random variable, controlled solely by an $n$-th order absolute moment functional $M_n(X)$. The bounds take the form $\lvert\kappa_n(X)\rvert \le C_n M_n(X)$. Our principal contribution is the derivation of coefficients satisfying $C_n \sim (n-1)!/\rho^{\,n}$, which offers an exponential improvement over classical bounds where the coefficients grow superexponentially (on the order of $n^n$).

We present a hierarchy of refinements where the rate parameter $\rho$ increases as the functional $M_n(X)$ incorporates more structural information. The most general bound uses the raw moment $M_n(X)=\mathsf{E}[\lvert X\rvert^n]$ with rate $\rho=\ln 2 \approx 0.693$. Using the central moment $M_n(X)=\mathsf{E}[\lvert X-\mathsf{E}[X]\rvert^n]$ improves the rate to $\rho_{\mathrm{cen}} \approx 1.146$, while assuming symmetry yields even higher rates.

The proof is elementary, combining the moment-cumulant partition formula with a uniform moment-product inequality. We further prove that while these bounds are not attainable whenever the relevant coefficient is positive, they are asymptotically efficient given the limited information of a single moment. The utility of the bounds is demonstrated through an application to standardized cumulants of independent sums.
\end{abstract}

\section{Introduction}

Cumulants (or semi-invariants) are fundamental descriptors of the statistical properties of a random variable. Unlike moments, which mix information of various orders, the $n$-th cumulant $\kappa_n(X)$ isolates the pure $n$-th order correlation structure, representing the part of the moment that cannot be explained by lower-order statistics. This property makes them indispensable in the study of independence, non-Gaussianity, and limit theorems. They appear as the coefficients in the logarithm of the characteristic function, serve as the controlling parameters in Edgeworth expansions, and play a central role in concentration inequalities, random matrix theory, and Stein's method.

Despite their theoretical importance, the practical manipulation of high-order cumulants is somehow difficult. The defining relationship between moments and cumulants involves an alternating sum over partitions, leading to significant algebraic cancellations. Consequently, while moments $\E{\abs{X}^n}$ typically grow regularly, cumulants can exhibit erratic behavior or vanish entirely (as in the Gaussian case for $n \ge 3$). Bounding cumulants is therefore a frequent necessity, yet existing methods often require strong distributional assumptions or yield bounds that are too coarse for high-order analysis.

\paragraph{Related Work.}
Existing strategies for bounding cumulants typically fall into three categories. 

\emph{Analytic methods} leverage the properties of a random variable's generating function. For instance, assuming the moment generating function is analytic in a strip around the origin allows for Cauchy-type integral estimates. This approach leads to sharp factorial bounds, often referred to as Statulevi\v{c}ius-type conditions \cite{statulevivcius1966large, doring2022method}. However, these techniques are inapplicable to heavy-tailed distributions where the generating function diverges or is not analytic.

\emph{Structural methods} exploit specific properties of the underlying model, such as the dependence graph in graphical models, mixing coefficients in time series, or chaos decompositions for functionals of Gaussian processes. In a complementary direction, Utev \cite{utev1989cumulants} used cumulant estimates to derive moment inequalities for sums of weakly dependent random variables under mixing and related dependence assumptions. While powerful, these approaches require structural assumptions (e.g., specific decay of correlations) that may not hold in general settings or can be difficult to verify.

\emph{Moment-based bounds}, in contrast, rely solely on the finiteness of moments and are therefore universally applicable. However, they are historically plagued by coarseness. While best-possible constants have been achieved for Rosenthal-type moment inequalities and sharp tail bounds for sums and martingales \cite{pinelis1994optimum, pinelis2015rosenthal,  pinelis2007asymmetric}, similar explicit sharpness has been lacking for the cumulants of individual variables. However, exact cumulant analysis remains a powerful tool for establishing non-conventional limit theorems, as seen in recent work on integer partitions\cite{stoyanov2020nonconventional}.  While bounds of the form $\abs{\kappa_n(X)} \le C_n \E{\abs{X-\E{X}}^n}$ are known, their utility is limited by the superexponential growth of the coefficient, typically on the order of $C_n \approx n^n$ \cite{ProkhorovRozanov1969_en, dubkov1976properties, pinelis2020bounds, jones2023bound}. This rapid growth renders them loose and often vacuous for large $n$.

\paragraph{Our Contribution.}
In this work, we present a unified framework that bridges the gap between the universality of moment-based bounds and the sharpness of analytic estimates. We establish explicit, distribution-free inequalities for the $n$-th cumulant controlled solely by the $n$-th absolute moment, yet we achieve a factorial growth rate $C_n \sim (n-1)!$ rather than the superexponential $n^n$.

We prove that for any scalar random variable $X$,
\[
\abs{\kappa_n(X)} \le C_n M_n(X),
\]
where the coefficient $C_n$ is the exact "combinatorial mass" of the partition formula associated with the structural properties of $X$ and $M_n(X)$ is an $n$-th order absolute moment functional. By combining the classical moment-cumulant formula \cite{leonov1959method, mccullagh1987tensor} with a uniform moment-product inequality derived from Lyapunov's inequality, we identify the precise constants $C_n$ and analyze their asymptotics.

Our results are organized as a hierarchy of refinements, where the bound tightens as we incorporate minimal information about the centering or symmetry of the variable:
\begin{enumerate}[label=(\roman*), topsep=2pt, itemsep=0pt, leftmargin=*]
    \item \textbf{Raw moment bound.} For every $n\ge1$,
    \[
    |\kappa_n(X)|\le \Craw{n}\,\E{|X|^n},
    \qquad
    \Craw{n}=2\operatorname{Bell}(n-1)\ \ (n\ge2),
    \]
    and $\Craw{n}\sim (n-1)!/(\ln 2)^n$.

    \item \textbf{Centered bound.} For every $n\ge2$,
    $|\kappa_n(X)|\le \Ccen{n}\,\E{|X-\E{X}|^n},$
    where $\Ccen{n}$ is the total coefficient mass over partitions with no singleton blocks, and
    \[
    \Ccen{n}\sim \frac{(n-1)!}{\rho_{\mathrm{cen}}^n},
    \qquad
    e^{\rho_{\mathrm{cen}}}=2+\rho_{\mathrm{cen}}.
    \]

    \item \textbf{Symmetric bound.} If $X\overset d= -X$, then $\kappa_n(X)=0$ for odd $n$, while for even $n=2m$,
    $|\kappa_{2m}(X)|\le \Csym{2m}\,\E{|X|^{2m}},$
    where $\Csym{2m}$ is the total coefficient mass over partitions with only even-sized blocks, and
    \[
    \Csym{2m}\sim 2\frac{(2m-1)!}{\rho_{\mathrm{sym}}^{2m}},
    \qquad
    \rho_{\mathrm{sym}}=\operatorname{arcosh}(2).
    \]
\end{enumerate}

The same method extends to multivariate joint cumulants with the same coefficients. We also prove non-attainment whenever the corresponding coefficient is positive, apart from the identity in the centered variance case $n=2$; for symmetric odd orders the coefficient is zero and the cumulant vanishes identically. Finally, we illustrate the bounds through an application to standardized cumulants of independent sums.

These bounds are assumption-free beyond the existence of the moment itself. While our work focuses on what a single moment can reveal about the corresponding cumulant, the fundamental question of how the entire sequence of moments characterizes a distribution is the subject of the classical moment problem. Extensive work has been done to determine conditions for moment determinacy based on the asymptotic growth of moments \cite{kleiber2013multivariate,lin2002moment,stoyanov2013counterexamples,stoyanov2020new}. Here, rather than aiming at global moment determinacy, we use the combinatorial structure of the moment--cumulant formula to derive explicit bounds on individual cumulants using only a single absolute moment.

\paragraph{Organization.}
\Cref{sec:prelim} recalls the moment--cumulant formula and the basic product estimate. \Cref{sec:main_bounds} states the raw, centered, and symmetric bounds. \Cref{sec:coefficient_analysis} analyzes the corresponding coefficients by generating functions. \Cref{sec:converse} discusses the absence of any reverse inequality depending only on a single moment. The multivariate extension appears in \Cref{sec:multivariate}. \Cref{sec:strict} proves non-attainment and compares the symmetric rate with a unit-circle benchmark. \Cref{sec:applications} contains an application to standardized cumulants of independent sums.

\section{Technical Preliminaries}
\label{sec:prelim}

\subsection{Cumulants and the Partition Formula}

For a random variable $X$ with $\E{\abs{X}^n} < \infty$, all moments $m_k = \E{X^k}$ for $k \le n$ exist. The cumulants $\kappa_1, \dots, \kappa_n$ are well-defined and can be computed directly from the moments.

We use the algebraic definition of cumulants through the moment--cumulant formula, which requires only the finiteness of moments up to the relevant order.

\begin{lemma}
\label{lem:mom-cum}
Let $n \ge 1$ and assume $\E{\abs{X}^n} < \infty$. The $n$-th cumulant $\kappa_n(X)$ is given by
\begin{equation}
\label{eq:mom-cum-formula}
\kappa_n(X) = \sum_{\pi \in \PP(n)} (-1)^{\abs{\pi}-1} (\abs{\pi}-1)! \prod_{B \in \pi} m_{\abs{B}},
\end{equation}
where $\PP(n)$ is the set of all partitions of $[n]\coloneqq\{1,\dots,n\}$, $\abs{\pi}$ is the number of blocks in a partition $\pi$, and the product is over all blocks $B$ in $\pi$. Here $m_k$ denotes the $k$-th raw moment.
\end{lemma}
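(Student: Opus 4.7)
The plan is to derive the formula by working entirely with formal power series modulo $z^{n+1}$, which sidesteps any analyticity assumption on the moment generating function. First I would observe that, under the hypothesis $\E{\abs{X}^n} < \infty$, all raw moments $m_0 = 1, m_1, \dots, m_n$ exist, so the truncated moment series $M(z) \coloneqq \sum_{k=0}^{n} m_k z^k/k!$ is a well-defined polynomial with $M(0) = 1$. Because the constant term is nonzero, the formal composition $K(z) \coloneqq \log M(z)$ is unambiguously defined modulo $z^{n+1}$ via $\log(1+u) = \sum_{j \ge 1} (-1)^{j-1} u^j/j$, and I take $\kappa_k \coloneqq K^{(k)}(0)$ for $k = 1, \dots, n$ as the working definition of the cumulants. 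This agrees with $\left.\partial_s^k \log \E{e^{sX}}\right|_{s=0}$ whenever the MGF is analytic near the origin, but it is strictly more general and only depends on finiteness of the $n$-th absolute moment.

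The main computational step is to apply the set-partition version of Faà di Bruno's formula to $\log \circ M$ at $z = 0$. That identity states that for suitable $f, g$,
\[
\left.\frac{d^n}{dz^n} f(g(z))\right|_{z=0} \;=\; \sum_{\pi \in \PP(n)} f^{(\abs{\pi})}(g(0)) \prod_{B \in \pi} g^{(\abs{B})}(0),
\]
and it transfers verbatim to the formal setting since both sides are universal polynomials in the first $n$ Taylor coefficients of $f$ and $g$. Specializing to $f = \log$, $g = M$, and $g(0) = 1$, the block factors become $M^{(\abs{B})}(0) = m_{\abs{B}}$, and the outer factor is $\log^{(\abs{\pi})}(1) = (-1)^{\abs{\pi}-1}(\abs{\pi}-1)!$, since the $j$-th derivative of $\log(1+u)$ at $u = 0$ equals $(-1)^{j-1}(j-1)!$. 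Collecting these terms reproduces \eqref{eq:mom-cum-formula} exactly.

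The step I expect to require the most care is justifying the Faà di Bruno invocation in the formal, non-analytic setting, and ensuring that the truncated-logarithm definition of $\kappa_k$ coincides with whichever definition the reader has in mind. The cleanest justification is the observation that both sides of Faà di Bruno are polynomial identities in the Taylor coefficients of $f$ and $g$, so the analytic case forces the formal one. As a backup route, if this argument feels too slick, one can first derive the inverse relation $m_n = \sum_{\pi \in \PP(n)} \prod_{B \in \pi} \kappa_{\abs{B}}$ by directly expanding $\exp(K(z))$ and grouping monomials by the set partition of $[n]$ they encode, and then apply Möbius inversion on the lattice of set partitions, invoking Rota's classical evaluation $\mu(\hat{0}, \hat{1}) = (-1)^{\abs{\pi}-1}(\abs{\pi}-1)!$ to extract the signed factorial factor. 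Either route concludes the proof.
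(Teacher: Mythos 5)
Your argument is correct, but it is worth noting that the paper does not actually prove this lemma: it explicitly \emph{adopts} the partition formula \eqref{eq:mom-cum-formula} as the definition of $\kappa_n(X)$ (precisely to avoid assuming analyticity of the MGF) and then points to Leonov--Shiryaev and to the M\"obius-inversion treatments of Speed and McCullagh for its equivalence with the generating-function definition. What you have supplied is the missing bridge: starting from $\kappa_k \coloneqq K^{(k)}(0)$ with $K=\log M$ taken as a formal power series modulo $z^{n+1}$, and applying the set-partition form of Fa\`a di Bruno with $f=\log$, $g=M$, $g(0)=1$, so that $f^{(j)}(1)=(-1)^{j-1}(j-1)!$ and $M^{(\abs{B})}(0)=m_{\abs{B}}$. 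All the steps check out: the truncated $M$ is well defined under $\E{\abs{X}^n}<\infty$, the formal logarithm is legitimate since $M(0)=1$, the $n$-th coefficient of $\log M$ depends only on $m_1,\dots,m_n$ so the truncation is harmless, and the ``universal polynomial identity'' justification for the formal Fa\`a di Bruno is standard and sound. Your backup route --- expanding $\exp(K(z))$ to get $m_n=\sum_{\pi}\prod_B \kappa_{\abs{B}}$ and then inverting via Rota's evaluation $\mu(\hat0,\hat1)=(-1)^{\abs{\pi}-1}(\abs{\pi}-1)!$ on the partition lattice --- is essentially the route the paper's cited references take. The only caveat is one you already flag yourself: the statement is only meaningful relative to a choice of definition of $\kappa_n$, and your proof establishes consistency between the formal-logarithm definition and the partition formula, whereas the paper treats the formula itself as primitive, making the lemma vacuous as stated in its own framework.
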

This identity is a classical result in combinatorics and statistics. It is the univariate instance of the general moment-cumulant formula for joint cumulants (or semi-invariants) detailed in the foundational work of Leonov and Shiryaev \cite{leonov1959method}. For modern treatments that derive this formula via Möbius inversion on the lattice of set partitions, see \cite{speed1983cumulants} and the textbook by \cite{mccullagh1987tensor}.

\begin{lemma}[Basic identities for cumulants]
\label{lem:cumulant-identities}
Let $X,Y$ be real-valued random variables with finite moments up to order $n$.

\begin{enumerate}[label=(\roman*), leftmargin=*]
    \item For every scalar $a\in\R$ and every integer $r\ge 1$,
    $\kappa_r(aX)=a^r\kappa_r(X).$
    \item If $X$ and $Y$ are independent, then for every integer $r\ge 1$,
    $\kappa_r(X+Y)=\kappa_r(X)+\kappa_r(Y).$
    \item For every constant $c\in\R$,
    $\kappa_1(X+c)=\kappa_1(X)+c,
    \qquad
    \kappa_r(X+c)=\kappa_r(X)\quad (r\ge 2).$
\end{enumerate}
\end{lemma}

\begin{proof}
Part (i) follows from \Cref{eq:mom-cum-formula}: the raw moments of $aX$ satisfy
$m_k(aX)=a^k m_k(X)$, so each summand in the partition formula acquires the factor
$\prod_{B\in\pi} a^{|B|}=a^r$.

For part (ii), fix $r\ge1$ and work in the quotient ring $\R[[t]]/(t^{r+1})$. Define the truncated moment series
\[
\widetilde M_X^{(r)}(t)\coloneqq \sum_{j=0}^{r} m_j(X)\frac{t^j}{j!},
\qquad
\widetilde M_Y^{(r)}(t)\coloneqq \sum_{j=0}^{r} m_j(Y)\frac{t^j}{j!}.
\]
Since $X$ and $Y$ are independent, the moments of $X+Y$ satisfy
\[
m_j(X+Y)=\sum_{\ell=0}^{j}\binom{j}{\ell}m_{\ell}(X)m_{j-\ell}(Y)
\qquad (0\le j\le r),
\]
hence
\[
\widetilde M_{X+Y}^{(r)}(t)=\widetilde M_X^{(r)}(t)\,\widetilde M_Y^{(r)}(t)
\qquad\text{in }\R[[t]]/(t^{r+1}).
\]
Each truncated moment series has constant term $1$, so the formal logarithm is well defined. By definition of cumulants,
\[
\log \widetilde M_X^{(r)}(t)=\sum_{j=1}^{r}\kappa_j(X)\frac{t^j}{j!},
\qquad
\log \widetilde M_Y^{(r)}(t)=\sum_{j=1}^{r}\kappa_j(Y)\frac{t^j}{j!},
\]
and similarly for $X+Y$. Therefore
\[
\log \widetilde M_{X+Y}^{(r)}(t)
=
\log \widetilde M_X^{(r)}(t)+\log \widetilde M_Y^{(r)}(t)
\qquad\text{in }\R[[t]]/(t^{r+1}),
\]
and comparing coefficients of $t^r/r!$ gives
$\kappa_r(X+Y)=\kappa_r(X)+\kappa_r(Y).$

For part (iii), apply part (ii) with the degenerate variable $c$. Its moment series is
$\widetilde M_c^{(r)}(t)=\sum_{j=0}^{r} c^j\frac{t^j}{j!},$
whose formal logarithm is simply $ct$ modulo $t^{r+1}$. Thus $\kappa_1(c)=c$ and
$\kappa_r(c)=0$ for $r\ge2$, which yields the stated shift identities.
\end{proof}

\subsection{A Uniform Moment Product Bound via Lyapunov's Inequality}

The second key ingredient is a powerful consequence of Lyapunov's inequality that allows us to uniformly bound any product of moments appearing in \Cref{lem:mom-cum}.

\begin{lemma}
\label{lem:product_collapse}
Let $n \ge 1$ and assume $\mu_n = \E{\abs{X}^n} < \infty$. For any partition $\pi \in \PP(n)$,
$\prod_{B \in \pi} \mu_{\abs{B}} \le \mu_n.$
\end{lemma}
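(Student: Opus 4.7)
The plan is to apply Lyapunov's moment inequality block-by-block and then recombine the exponents using the fact that a partition's block sizes sum to $n$. Recall that Lyapunov's inequality states that for $0 < r \le s$, if $\mu_s = \mathsf{E}[|X|^s] < \infty$, then $\mu_r^{1/r} \le \mu_s^{1/s}$, which we rewrite in the form $\mu_r \le \mu_s^{r/s}$.

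First, I would note that for any block $B \in \pi$ we have $1 \le |B| \le n$, so by Lyapunov applied with $r = |B|$ and $s = n$,
\[
\mu_{|B|} \le \mu_n^{|B|/n}.
\]
This is the step that \emph{uniformly} collapses all the individual block-moment factors into powers of the single quantity $\mu_n$, which is why the inequality is powerful enough to be used inside the moment-cumulant sum without any information about the specific partition structure.

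Next, I would multiply these inequalities over all blocks $B \in \pi$ (all factors are nonnegative, so the product inequality is valid) and use the defining property of a partition, namely $\sum_{B \in \pi} |B| = n$. This gives
\[
\prod_{B \in \pi} \mu_{|B|} \;\le\; \prod_{B \in \pi} \mu_n^{|B|/n} \;=\; \mu_n^{\,\frac{1}{n}\sum_{B \in \pi}|B|} \;=\; \mu_n^{\,n/n} \;=\; \mu_n,
\]
which is exactly the desired bound.

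There is essentially no obstacle here: the lemma is a one-line consequence of Lyapunov's inequality once one recognizes that the exponents provided by Lyapunov are perfectly matched to the partition constraint $\sum |B| = n$. The only minor point worth flagging is the edge case where $\mu_n = 0$, corresponding to $X = 0$ almost surely, in which case all $\mu_k$ vanish and the inequality trivially holds as $0 \le 0$; otherwise $\mu_n > 0$ and the manipulation of exponents is unambiguous.
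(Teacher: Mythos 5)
Your proof is correct and follows exactly the same route as the paper: apply Lyapunov's inequality blockwise to get $\mu_{|B|} \le \mu_n^{|B|/n}$, then multiply over blocks and use $\sum_{B\in\pi}|B| = n$. The only difference is your explicit handling of the degenerate case $\mu_n = 0$, which is a harmless (and reasonable) addition.
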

\begin{proof}
The proof is a direct application of Lyapunov's inequality (monotonicity of $L^p$ norms on a probability space), which establishes the log-convexity of absolute moments. For completeness, a proof is provided in \Cref{rem:lyapunov_proof}. Applying Lyapunov's inequality to each block size $\abs{B}$ in the partition $\pi$ gives $(\mu_j)^{1/j} \le (\mu_n)^{1/n}$, or $\mu_j \le \mu_n^{j/n}$ for $j<n$. Thus,
$\mu_{\abs{B}} \le \mu_n^{\abs{B}/n}.$
Multiplying over all blocks $B \in \pi$ yields
\[
\prod_{B \in \pi} \mu_{\abs{B}} \le \prod_{B \in \pi} \mu_n^{\abs{B}/n} = \mu_n^{\sum_{B \in \pi} \abs{B}/n} = \mu_n^{n/n} = \mu_n,
\]
since the sum of the sizes of the blocks in a partition of $\{1, \dots, n\}$ is $n$.
\end{proof}

\begin{remark}[Proof of Lyapunov's Inequality]
\label{rem:lyapunov_proof}
Let $0 < j < k$. We want to show $(\mu_j)^{1/j} \le (\mu_k)^{1/k}$. Let $Y = \abs{X}^j$ and $p = k/j > 1$. The function $\phi(z) = z^p$ on $[0,\infty)$ is convex. By Jensen's inequality:
$\phi(\E{Y}) \le \E{\phi(Y)}.$
Substituting our definitions gives:
$(\E{\abs{X}^j})^{k/j} \le \E{(\abs{X}^j)^{k/j}} = \E{\abs{X}^k}.$
Taking the $k$-th root of both sides yields $(\mu_j)^{1/j} \le (\mu_k)^{1/k}$.
\end{remark}

\begin{lemma}[Equality in Lyapunov on a probability space]
\label{lem:lyapunov-equality}
Let $Y\ge0$ be measurable on a probability space and let $0<p<q<\infty$. Then
$\norm{Y}_{p}\le \norm{Y}_{q}.$
If $\norm{Y}_{q}<\infty$, equality holds if and only if $Y$ is almost surely constant.
\end{lemma}

\begin{proof}
The inequality is Lyapunov's inequality. For the equality statement, write $r=q/p>1$ and apply Jensen to the convex function $z\mapsto z^r$ on $[0,\infty)$ with the random variable $Y^p$:
$(\E{Y^p})^{r}\le \E{Y^{pr}}=\E{Y^q}.$
Equality in Jensen for a strictly convex function occurs if and only if $Y^p$ is almost surely constant, hence if and only if $Y$ is almost surely constant.
\end{proof}

\subsection{Stirling and Ordered Bell Numbers}

We use $\stirlingii{n}{k}$ for Stirling numbers of the second kind (the number of partitions of an $n$-element set into $k$ nonempty blocks).

\begin{definition}[Ordered Bell (Fubini) Numbers]
\label{def:ordered_bell}
The ordered Bell number $\operatorname{Bell}(m)$ counts the number of ordered partitions of an $m$-element set:
$\operatorname{Bell}(m) \coloneqq \sum_{k=0}^{m} \stirlingii{m}{k} k!.$
\end{definition}

\section{Main Results: Universal Bounds for Cumulants}
\label{sec:main_bounds}

Our main results provide universal bounds on the $n$-th cumulant using only its corresponding absolute moment. We begin by defining our notation and then present the main bounds.

\begin{definition}
Let $X$ be a real-valued random variable. For any $r>0$ such that $\E{\abs{X}^r} < \infty$, we define the \emph{$r$-th absolute moment} as $\mu_r \coloneqq \E{\abs{X}^r}$. For any integer $k \ge 1$ such that $\mu_k < \infty$, we define the \emph{$k$-th raw moment} as $m_k \coloneqq \E{X^k}$. Note that since $|\cdot|$ is a convex function, by Jensen's inequality, $|m_k| = |\E{X^k}| \le \E{|X^k|} = \E{\abs{X}^k} = \mu_k$.
\end{definition}

\subsection{The General Raw-Moment Bound}

\begin{theorem}
\label{thm:main}
Let $X$ be a real-valued random variable. For any integer $n \ge 1$, if $\mu_n = \E{\abs{X}^n} < \infty$, then the $n$-th cumulant $\kappa_n(X)$ satisfies
$\abs{\kappa_n(X)} \le \Craw{n} \mu_n.$
For $n \ge 2$, this coefficient is exactly $2 \operatorname{Bell}(n-1)$, where $\operatorname{Bell}(m)$ are the ordered Bell numbers.
\end{theorem}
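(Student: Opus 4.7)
The plan is to combine the three ingredients already assembled in the preliminaries: the moment--cumulant formula of \Cref{lem:mom-cum}, the elementary inequality $\abs{m_k}\le\mu_k$ from Jensen's inequality, and the uniform product bound of \Cref{lem:product_collapse}. Applying the triangle inequality to \eqref{eq:mom-cum-formula} and then those two bounds in turn will factor $\mu_n$ out completely, leaving a purely combinatorial coefficient. So the work reduces to identifying that combinatorial coefficient and matching it with $2\operatorname{Bell}(n-1)$.

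Concretely, I would first write
\[
\abs{\kappa_n(X)} \le \sum_{\pi\in\PP(n)} (\abs{\pi}-1)! \prod_{B\in\pi}\abs{m_{\abs{B}}}
\le \sum_{\pi\in\PP(n)} (\abs{\pi}-1)! \prod_{B\in\pi}\mu_{\abs{B}}
\le \mu_n \sum_{\pi\in\PP(n)} (\abs{\pi}-1)!,
\]
where the last step uses \Cref{lem:product_collapse}. Grouping partitions by their number of blocks $k$ gives the exact coefficient
\[
\Craw{n} \;=\; \sum_{k=1}^{n} \stirlingii{n}{k}(k-1)!.
\]
For $n=1$ this evaluates to $1$, recovering $\abs{\kappa_1}=\abs{m_1}\le\mu_1$.

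The only non-routine part is identifying this sum with $2\operatorname{Bell}(n-1)$ for $n\ge 2$. I would do this by invoking the standard recurrence $\stirlingii{n}{k}=k\stirlingii{n-1}{k}+\stirlingii{n-1}{k-1}$, which splits the sum into two pieces:
\[
\sum_{k=1}^{n} \stirlingii{n}{k}(k-1)! \;=\; \sum_{k=1}^{n-1} k!\,\stirlingii{n-1}{k} \;+\; \sum_{k=1}^{n}(k-1)!\,\stirlingii{n-1}{k-1}.
\]
After the reindexing $j=k-1$ in the second sum, and using $\stirlingii{n-1}{0}=\delta_{n,1}$ together with $\stirlingii{n-1}{n}=0$, both sums collapse to $\operatorname{Bell}(n-1)$ by \Cref{def:ordered_bell}, yielding $2\operatorname{Bell}(n-1)$ as claimed. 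I expect this combinatorial identity to be the only real step that needs argument; everything else is bookkeeping around the two preliminary lemmas. A short sanity check at $n=2$ (where $\abs{\kappa_2}=\abs{m_2-m_1^2}\le 2\mu_2$ and $2\operatorname{Bell}(1)=2$) confirms the constant is correct and tight at the level of this inequality chain.
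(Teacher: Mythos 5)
Your proposal is correct and follows essentially the same route as the paper: the triangle inequality applied to \Cref{lem:mom-cum}, followed by $\abs{m_k}\le\mu_k$ and \Cref{lem:product_collapse}, and then the identity $\sum_{k=1}^{n}\stirlingii{n}{k}(k-1)!=2\operatorname{Bell}(n-1)$ established via the Stirling recurrence exactly as in \Cref{lem:coefficient_mass}. Nothing is missing; your handling of the boundary terms $\stirlingii{n-1}{0}$ and $\stirlingii{n-1}{n}$ matches the paper's, and the $n=1$ and $n=2$ sanity checks are consistent with the paper's remark on the $n=1$ case.
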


\begin{proof}
The proof relies on taking the absolute value of the moment--cumulant formula (\Cref{lem:mom-cum}).
\begin{align*}
\abs{\kappa_n(X)} &= \abs*{ \sum_{\pi \in \PP(n)} (-1)^{\abs{\pi}-1} (\abs{\pi}-1)! \prod_{B \in \pi} m_{\abs{B}} } &&\text{(By \Cref{lem:mom-cum})} \\
&\le \sum_{\pi \in \PP(n)} (\abs{\pi}-1)! \prod_{B \in \pi} \abs{m_{\abs{B}}} &&\text{(Triangle inequality)} \\
&\le \sum_{\pi \in \PP(n)} (\abs{\pi}-1)! \prod_{B \in \pi} \mu_{\abs{B}} &&\text{(Since $\abs{m_k} \le \mu_k$)} \\
&\le \left( \sum_{\pi \in \PP(n)} (\abs{\pi}-1)! \right) \mu_n &&\text{(By \Cref{lem:product_collapse})} \\
&= 2\operatorname{Bell}(n-1) \mu_n. &&\text{(By \Cref{lem:coefficient_mass}, for $n \ge 2$)}\\
&\coloneqq \Craw{n} \mu_n.
\end{align*}
\end{proof}

\begin{remark} [Case: $n = 1$]
    For the trivial case $n=1$, the sum is over the single partition $\pi=\{\{1\}\}$, giving a coefficient of $(|\pi|-1)! = (1-1)! = 1$. The bound is $|\kappa_1| = |m_1| \le \mu_1$, which is sharp. The identity relating the coefficient sum to ordered Bell numbers, $\sum_{\pi \in \PP(n)} (|\pi|-1)! = 2\operatorname{Bell}(n-1)$, holds for $n \ge 2$. While one can define $\operatorname{Bell}(0)=1$, applying the formula for $n=1$ yields a coefficient of $2\operatorname{Bell}(0)=2$, which provides a valid but non-sharp bound. Therefore, for convenience, we define $\Craw{1} = 1$.
\end{remark}

\begin{corollary}[Centered Form (basic)]
\label{cor:centered-basic}
For any $n \ge 2$, if $\E{\abs{X}^n} < \infty$, then
$\abs{\kappa_n(X)} = \abs{\kappa_n(X - \E{X})} \le \Craw{n} \E{\abs{X - \E{X}}^n}.$
This is a direct application of \Cref{thm:main} but is generally looser than the refined bound in \Cref{thm:central_sharp}.
\end{corollary}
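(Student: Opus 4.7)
The plan is a one-line reduction: apply \Cref{thm:main} to the centered variable $Y \coloneqq X - \E{X}$, combined with the translation invariance $\kappa_n(X + c) = \kappa_n(X)$ valid for $n \ge 2$ and any constant $c \in \R$.

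Concretely, I would first verify translation invariance. Within this paper's framework, where $\kappa_n$ is defined through the moment-cumulant formula \eqref{eq:mom-cum-formula} rather than through the cumulant generating function, this is a combinatorial identity about the partition sum. A short derivation substitutes the binomial expansion of the raw moments $\E{(X + c)^k}$ into \eqref{eq:mom-cum-formula} and, after grouping terms by the exponent of $c$, shows that all $c$-dependent contributions cancel for $n \ge 2$. Equivalently, one can cite it as classical: it follows instantly from $K_{X+c}(s) = cs + K_X(s)$ whenever the CGF exists, and extends to the general moment-cumulant formulation by formal identification.

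Once invariance is in hand, let $Y = X - \E{X}$. Minkowski's inequality gives $\E{\abs{Y}^n}^{1/n} \le \E{\abs{X}^n}^{1/n} + \abs{\E{X}}$, so $\E{\abs{Y}^n} < \infty$ whenever $\E{\abs{X}^n} < \infty$, and \Cref{thm:main} applies to $Y$. Chaining
\[
\abs{\kappa_n(X)} \;=\; \abs{\kappa_n(Y)} \;\le\; \Craw{n}\, \E{\abs{Y}^n} \;=\; \Craw{n}\, \E{\abs{X - \E{X}}^n}
\]
yields the stated bound.

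The only nonroutine step is the translation invariance; everything else is an immediate application of \Cref{thm:main}. Since the paper works definitionally from the partition formula rather than the CGF, I would include a brief combinatorial verification of invariance to keep the proof self-contained, rather than quoting the generating-function identity. After that, the corollary is a two-line consequence.
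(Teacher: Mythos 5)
Your proof is correct and matches the paper's intended argument exactly: the paper also treats this corollary as an immediate consequence of shift invariance $\kappa_n(X)=\kappa_n(X-\E{X})$ for $n\ge 2$ followed by an application of \Cref{thm:main} to the centered variable. Your extra care in verifying translation invariance combinatorially and checking finiteness of $\E{\abs{X-\E{X}}^n}$ via Minkowski is more detail than the paper provides, but it is the same route.
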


\begin{proof}
By \Cref{lem:cumulant-identities}(iii), $\kappa_n(X)=\kappa_n(X-\E{X})$ for $n\ge2$. Also,
\[
\abs{X-\E{X}}^n\le 2^{n-1}\bigl(\abs{X}^n+\abs{\E{X}}^n\bigr)
\le 2^{n-1}\bigl(\abs{X}^n+\E{\abs{X}^n}\bigr),
\]
so $\E{\abs{X-\E{X}}^n}<\infty$. Applying \Cref{thm:main} to $X-\E{X}$ gives the claim.
\end{proof}

\begin{remark}[Asymptotic Improvement]
As we show in \Cref{sec:coefficient_analysis}, the coefficient has the asymptotic behavior $\Craw{n} = 2\operatorname{Bell}(n-1) \sim \frac{(n-1)!}{(\ln 2)^n}$. This is exponentially smaller in $n$ than the coefficient mass of the form $n^n$ that appears in other general-purpose bounds \cite{ProkhorovRozanov1969_en, dubkov1976properties, pinelis2020bounds, jones2023bound}.
\end{remark}

\subsection{The Universal Central-Moment Bound}

For orders $n\ge 2$, cumulants are shift invariant: $\kappa_n(X)=\kappa_n(X-\E{X})$. Consequently, any bound stated for centered variables is automatically a bound for any $X$, provided one evaluates moments at the centered variable. We never require the structural assumption $\E{X}=0$; we only use this shift-invariance. This leads to our headline result.

\begin{theorem}
\label{thm:central_sharp}
Let $X$ be a real-valued random variable, let $n\ge2$, and write
$\mu_k^{(c)}\coloneqq \E{\abs{X-\E{X}}^k}
\qquad (1\le k\le n).$
Assume $\mu_n^{(c)}<\infty$. Then
$\abs{\kappa_n(X)} \le \Ccen{n}\,\mu_n^{(c)},$
where
\[
\Ccen{n}\coloneqq
\sum_{\pi\in\PP_{\ge2}(n)}(\abs{\pi}-1)!,
\qquad
\PP_{\ge2}(n)\coloneqq \{\pi\in\PP(n): |B|\ge2 \text{ for every } B\in\pi\}.
\]
Moreover,
$\Ccen{n} \sim \frac{(n-1)!}{\rho_{\mathrm{cen}}^n}
\qquad (n\to\infty),$
where $\rho_{\mathrm{cen}}>0$ is the unique positive solution of
$e^{\rho_{\mathrm{cen}}}=2+\rho_{\mathrm{cen}}$.
\end{theorem}

\begin{proof}
Let $Y\coloneqq X-\E{X}$. By \Cref{lem:cumulant-identities}(iii),
$\kappa_n(X)=\kappa_n(Y).$
Since $m_1(Y)=\E{Y}=0$, every partition in \eqref{eq:mom-cum-formula} containing a singleton block contributes zero. Therefore
\[
\kappa_n(Y)
=
\sum_{\pi\in\PP_{\ge2}(n)}
(-1)^{|\pi|-1}(|\pi|-1)!\prod_{B\in\pi} m_{|B|}(Y).
\]
Taking absolute values and using $|m_k(Y)|\le \mu_k^{(c)}$ for every $k\le n$, we obtain
\[
|\kappa_n(X)|
=
|\kappa_n(Y)|
\le
\sum_{\pi\in\PP_{\ge2}(n)}
(|\pi|-1)!\prod_{B\in\pi} \mu_{|B|}^{(c)}.
\]
Applying \Cref{lem:product_collapse} to the centered variable $Y$ yields
$\prod_{B\in\pi}\mu_{|B|}^{(c)}\le \mu_n^{(c)}
\qquad (\pi\in\PP_{\ge2}(n)),$
and hence
\[
|\kappa_n(X)|
\le
\left(\sum_{\pi\in\PP_{\ge2}(n)}(|\pi|-1)!\right)\mu_n^{(c)}
=
\Ccen{n}\,\mu_n^{(c)}.
\]
The stated asymptotic is proved in \Cref{prop:C0-asymptotics}.
\end{proof}

\begin{corollary}
\label{cor:centered-refinement}
If $X$ is centered ($\E{X} = 0$) and $\E{\abs{X}^n}<\infty$, then for $n \ge 2$
$\abs{\kappa_{n}(X)} \le \Ccen{n} \E{\abs{X}^{n}}.$
\end{corollary}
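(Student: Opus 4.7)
The plan is to obtain this result as a one-line specialization of \Cref{thm:central_sharp}, with no new combinatorics or analysis required.

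First, I would observe that the hypothesis $\E{X} = 0$ means the centered variable $X - \E{X}$ coincides pathwise with $X$, so the centered absolute moment collapses to the raw absolute moment:
\[
\mu_n^{(c)} = \E{\abs{X - \E{X}}^n} = \E{\abs{X}^n}.
\]
In particular, $\mu_n^{(c)} < \infty$ whenever $\E{\abs{X}^n} < \infty$, so the hypothesis of \Cref{thm:central_sharp} is satisfied automatically.

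Second, I would invoke \Cref{thm:central_sharp} directly to conclude $\abs{\kappa_n(X)} \le \Ccen{n} \mu_n^{(c)}$, and then substitute the identity above to recover the claimed inequality $\abs{\kappa_n(X)} \le \Ccen{n} \E{\abs{X}^n}$.

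There is essentially no obstacle here: the content of the corollary is purely notational. It records the observation that when one already possesses the structural information $\E{X} = 0$, the universal bound \Cref{thm:central_sharp} may be stated in terms of the raw moment $\E{\abs{X}^n}$, which is typically easier to compute or estimate than the central moment. No separate combinatorial argument or refinement of $\Ccen{n}$ is needed, since the coefficient is inherited unchanged from \Cref{thm:central_sharp}.
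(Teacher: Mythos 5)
Your proposal is correct and matches the paper's own proof exactly: both simply note that $\E{X}=0$ makes $\E{\abs{X-\E{X}}^n}=\E{\abs{X}^n}$ and then apply \Cref{thm:central_sharp}. Nothing further is needed.
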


\begin{proof}
Since $\E{X} = 0$, we have
$\E{\abs{X - \E{X}}^{n}} = \E{\abs{X - 0}^{n}} = \E{\abs{X}^{n}}.$
The result follows by applying \Cref{thm:central_sharp} with $\mu_n^{(c)} = \E{\abs{X - \E{X}}^n} = \E{\abs{X}^n}$.
\end{proof}

\subsection{Refinement for Symmetric Distributions}
\label{sec:refinements}

The universal bound can be further sharpened if the random variable has a symmetric distribution. In this case, all odd moments vanish, which eliminates any partition containing a block of odd size from the moment-cumulant formula.

If $X$ has a symmetric distribution ($X \overset{d}= -X$), then $m_{2r+1}=0$ for all $r\ge 0$. Let $\Csym{n}$ denote the total coefficient mass over partitions whose blocks all have even size:
\[
\sum_{\substack{\pi\in\PP(n)\\ \forall B\in\pi:\ |B|\text{ even}}} (\abs{\pi}-1)! = \Csym{n}.
\]
The combinatorial structure of $\Csym{n}$ and its generating function and asymptotic analysis are detailed in \Cref{subsec:Csym-analysis}.

\begin{theorem}
\label{thm:symmetric-refinement}
Assume that $X\overset d= -X$ and $\E{\abs{X}^n}<\infty$.

\begin{enumerate}[label=(\roman*), leftmargin=*]
    \item If $n$ is odd, then $\kappa_n(X)=0$.
    \item If $n=2m$ is even, then
    $\abs{\kappa_{2m}(X)} \le \Csym{2m}\,\E{\abs{X}^{2m}},$
    where
    \[
    \Csym{2m}\coloneqq
    \sum_{\pi\in\PP_{\mathrm{even}}(2m)}(\abs{\pi}-1)!,
    \qquad
    \PP_{\mathrm{even}}(2m)\coloneqq
    \{\pi\in\PP(2m): |B| \text{ is even for every } B\in\pi\}.
    \]
    Moreover,
    $\Csym{2m}\sim 2\frac{(2m-1)!}{\rho_{\mathrm{sym}}^{2m}}
    \qquad (m\to\infty),$
    where $\rho_{\mathrm{sym}}=\operatorname{arcosh}2$.
\end{enumerate}
\end{theorem}

\begin{proof}
If $X\overset d= -X$, then $m_{2r+1}=0$ for every $r\ge0$.

If $n$ is odd, every partition of $[n]$ has at least one block of odd cardinality, so each term in
\eqref{eq:mom-cum-formula} contains an odd moment and therefore vanishes. Hence $\kappa_n(X)=0$.

Now let $n=2m$. The same observation shows that only partitions whose blocks all have even size survive in the moment--cumulant formula:
\[
\kappa_{2m}(X)
=
\sum_{\pi\in\PP_{\mathrm{even}}(2m)}
(-1)^{|\pi|-1}(|\pi|-1)!\prod_{B\in\pi} m_{|B|}.
\]
Therefore
\[
|\kappa_{2m}(X)|
\le
\sum_{\pi\in\PP_{\mathrm{even}}(2m)}
(|\pi|-1)!\prod_{B\in\pi}|m_{|B|}|
\le
\sum_{\pi\in\PP_{\mathrm{even}}(2m)}
(|\pi|-1)!\prod_{B\in\pi}\mu_{|B|},
\]
where $\mu_k=\E{|X|^k}$. Applying \Cref{lem:product_collapse} gives
\[
\prod_{B\in\pi}\mu_{|B|}\le \mu_{2m}
\qquad (\pi\in\PP_{\mathrm{even}}(2m)),
\]
hence
\[
|\kappa_{2m}(X)|
\le
\left(\sum_{\pi\in\PP_{\mathrm{even}}(2m)}(|\pi|-1)!\right)\mu_{2m}
=
\Csym{2m}\,\E{|X|^{2m}}.
\]
The stated asymptotic follows from \Cref{prop:Csym-asymptotics}.
\end{proof}

\begin{table}[h!]
\centering
\caption{Summary of Asymptotic Coefficients and Parameters.}
\label{tab:summary}
\begin{tabular}{@{}lllc@{}}
\toprule
\textbf{Case} & \textbf{Assumption} & \textbf{Asymptotic Form of $C_n$} & \textbf{Parameter $\rho$} \\ \midrule
General & None & $\Craw{n} \sim (n-1)! \,/\, (\ln 2)^n$ & $0.693$ \\[1ex]
Centered & $n \ge 2$ & $\Ccen{n} \sim (n-1)! \,/\, \rho_{\mathrm{cen}}^n$ & $1.146$ \\[1ex]
Symmetric & $X \overset{d}= -X$ & $\Csym{2m} \sim 2(2m-1)! \,/\, \rho_{\mathrm{sym}}^{2m}$ & $1.317$ \\ \bottomrule
\end{tabular}
\end{table}

\begin{table}[h!]
\centering
\caption{Comparison of explicit coefficients for $n=2$ to $9$. Note the rapid divergence of the raw bound $\Craw{n}$ compared to the refinements $\Ccen{n}$ (no singletons) and $\Csym{n}$ (even blocks only).}
\label{tab:explicit_values}
\begin{tabular}{@{}c|r r r r r r r r@{}}
\toprule
$n$ & 2 & 3 & 4 & 5 & 6 & 7 & 8 & 9 \\ \midrule
$\Craw{n}$ & 2 & 6 & 26 & 150 & 1\,082 & 9\,366 & 94\,586 & 1\,091\,670 \\
$\Ccen{n}$ & 1 & 1 & 4 & 11 & 56 & 267 & 1\,730 & 11\,643 \\
$\Csym{n}$ & 1 & 0 & 4 & 0 & 46 & 0 & 1\,114 & 0 \\ \bottomrule
\end{tabular}
\end{table}

\section{Analysis of the Combinatorial Coefficients}
\label{sec:coefficient_analysis}

The coefficients in our bounds arise from summing $(\abs{\pi}-1)!$ over different families of partitions. We analyze them using exponential generating functions (EGFs).

\subsection{The General Case: Ordered Bell Numbers}

The coefficient $2\operatorname{Bell}(n-1)$ from \Cref{thm:main} arises from summing over all partitions.

\begin{lemma}
\label{lem:coefficient_mass}
For $n \ge 2$, the total mass of the coefficients in the moment--cumulant formula is
\[
\sum_{\pi \in \PP(n)} (\abs{\pi}-1)! = \sum_{k=1}^{n} \stirlingii{n}{k} (k-1)! = 2\operatorname{Bell}(n-1).
\]
\end{lemma}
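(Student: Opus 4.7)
The plan is to split the identity into its two equalities and handle each separately. The first equality, $\sum_{\pi \in \PP(n)} (\abs{\pi}-1)! = \sum_{k=1}^n \stirlingii{n}{k}(k-1)!$, is immediate from grouping the partitions of $[n]$ by their number of blocks: by definition $\stirlingii{n}{k}$ counts the partitions with exactly $k$ blocks, and on this class the weight $(\abs{\pi}-1)! = (k-1)!$ is constant, so the sum collapses to the stated form.

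For the second equality I would use exponential generating functions. Writing $a_n \coloneqq \sum_{k=1}^n \stirlingii{n}{k}(k-1)!$ and $A(z) \coloneqq \sum_{n\ge 1} a_n z^n/n!$, I would start from the classical EGF $\sum_{n\ge k} \stirlingii{n}{k} z^n/n! = (e^z-1)^k/k!$ and interchange the order of summation to obtain
\[
A(z) \;=\; \sum_{k\ge 1} (k-1)! \,\frac{(e^z-1)^k}{k!} \;=\; \sum_{k\ge 1} \frac{(e^z-1)^k}{k} \;=\; -\log\!\bigl(2-e^z\bigr),
\]
recognizing the right-hand side as the logarithm series (valid as a formal power-series identity, with convergence on a neighborhood of the origin where $|e^z-1|<1$).

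Next I would compute the EGF of the target sequence $b_n \coloneqq 2\operatorname{Bell}(n-1)$. Using the standard identity $\sum_{m\ge 0}\operatorname{Bell}(m)z^m/m! = 1/(2-e^z)$ together with termwise integration, one gets
\[
B(z) \;\coloneqq\; \sum_{n\ge 1} 2\operatorname{Bell}(n-1)\,\frac{z^n}{n!} \;=\; 2\int_0^z \frac{dt}{2-e^t}.
\]
The antiderivative is elementary: substituting $u=e^t$ and using partial fractions on $1/(u(2-u))$ gives $\int_0^z dt/(2-e^t) = \tfrac{1}{2}\log\!\bigl(e^z/(2-e^z)\bigr)$, whence $B(z) = z - \log(2-e^z)$. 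Comparing with $A(z)$ shows $B(z) - A(z) = z$, so $b_n = a_n$ for every $n\ge 2$, which is the claimed identity. The singleton discrepancy at $n=1$ ($a_1 = 1$ vs.\ $2\operatorname{Bell}(0) = 2$) is exactly the coefficient of $z$ in $B - A$, consistent with the remark following \Cref{thm:main}.

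There is no real obstacle: the only mildly computational step is the integral, and it reduces to a one-line partial-fraction calculation. The conceptual content is simply that $-\log(2-e^z)$ is the EGF for cyclically ordered set partitions (summing $(k-1)!\stirlingii{n}{k}$) while $1/(2-e^z)$ is the EGF for linearly ordered ones (Fubini numbers), and these two EGFs are related by a single differentiation, which after integrating back produces exactly the factor of $2$ and the $n$-shift appearing in $2\operatorname{Bell}(n-1)$.
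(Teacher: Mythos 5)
Your proof is correct, but it takes a genuinely different route for the second equality. The paper proves $\sum_{k=1}^{n}\stirlingii{n}{k}(k-1)! = 2\operatorname{Bell}(n-1)$ purely algebraically, by applying the Stirling recurrence $\stirlingii{n}{k}=\stirlingii{n-1}{k-1}+k\stirlingii{n-1}{k}$ and reindexing, which splits the sum into two copies of $\operatorname{Bell}(n-1)$ in three lines with no analytic machinery. You instead identify the two exponential generating functions, $A(z)=-\log(2-e^z)$ for the left side and $B(z)=2\int_0^z (2-e^t)^{-1}\,dt = z-\log(2-e^z)$ for the right side, and observe that they differ by exactly $z$, which also cleanly explains the $n=1$ discrepancy noted in the remark after \Cref{thm:main}. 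Your computations check out (the partial-fraction integral is right, and the termwise integration correctly implements the index shift $\operatorname{Bell}(m)\mapsto\operatorname{Bell}(n-1)$). The paper's argument is more elementary and self-contained at this point in the exposition; yours is slightly heavier but arguably more illuminating, since it exhibits $-\log(2-e^z)$ as the unrestricted-block analogue of the EGFs $-\log(2-e^x+x)$ and $-\log(2-\cosh x)$ that the paper later derives for $\Ccen{n}$ and $\Csym{n}$, and it makes the factor of $2$ and the shift $n\mapsto n-1$ transparent via the derivative identity $\frac{d}{dz}\bigl(-\log(2-e^z)\bigr)=\frac{2}{2-e^z}-1$. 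Either approach is acceptable; yours unifies \Cref{lem:coefficient_mass} with the generating-function analysis of \Cref{sec:coefficient_analysis}.
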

\begin{proof}
The first equality follows from grouping partitions by their number of blocks, $k = \abs{\pi}$. There are $\stirlingii{n}{k}$ partitions with $k$ blocks. To prove the second equality, we use the standard recurrence for Stirling numbers of the second kind, $\stirlingii{n}{k} = \stirlingii{n-1}{k-1} + k \stirlingii{n-1}{k}$. Let $S_n = \sum_{k=1}^{n} \stirlingii{n}{k} (k-1)!$. For $n \ge 2$:
\begin{align*}
S_n &= \sum_{k=1}^{n} \left(\stirlingii{n-1}{k-1} + k \stirlingii{n-1}{k}\right) (k-1)! \\
&= \sum_{k=1}^{n} \stirlingii{n-1}{k-1} (k-1)! + \sum_{k=1}^{n} k! \stirlingii{n-1}{k} \\
&= \sum_{j=0}^{n-1} \stirlingii{n-1}{j} j! + \sum_{k=1}^{n-1} k! \stirlingii{n-1}{k} && \text{(Let $j=k-1$; use $\stirlingii{n-1}{n}=0$)}\\
&= \operatorname{Bell}(n-1) + \sum_{k=0}^{n-1} k! \stirlingii{n-1}{k} && \text{(Since $\stirlingii{n-1}{0}=0$ for $n\ge2$)}\\
&= \operatorname{Bell}(n-1) + \operatorname{Bell}(n-1) = 2\operatorname{Bell}(n-1).
\end{align*}
\end{proof}

The asymptotic size is determined by the EGF for $(\operatorname{Bell}(m))$.

\begin{lemma}
\label{lemma:egf}
The exponential generating function of $(\operatorname{Bell}(m))$ is $A(x) \coloneqq \sum_{m=0}^{\infty} \operatorname{Bell}(m) \frac{x^m}{m!} = \frac{1}{2 - e^x}$.
\end{lemma}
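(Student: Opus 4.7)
The plan is to derive the closed form by the labeled symbolic method. An ordered partition of $[m]$ is an ordered sequence of nonempty blocks whose disjoint union is $[m]$, so in EGF calculus it is a \emph{sequence} of \emph{nonempty labeled sets}. The EGF of a single nonempty labeled set is $\sum_{j\ge 1} x^j/j! = e^x - 1$, and by the labeled product/sequence rules the EGF of an ordered sequence of exactly $k$ such sets is $(e^x-1)^k$. Summing over all $k \ge 0$ yields
\[
A(x) \;=\; \sum_{k=0}^{\infty} (e^x-1)^k \;=\; \frac{1}{1-(e^x-1)} \;=\; \frac{1}{2-e^x},
\]
as a formal power series and as an analytic identity on $|e^x-1|<1$.

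If a more bare-hands derivation is preferred, one can start from \Cref{def:ordered_bell} and interchange the two summations:
\[
A(x) \;=\; \sum_{m\ge 0}\sum_{k=0}^{m} \stirlingii{m}{k} k! \,\frac{x^m}{m!}
\;=\; \sum_{k\ge 0} k!\sum_{m\ge k} \stirlingii{m}{k}\frac{x^m}{m!}
\;=\; \sum_{k\ge 0} (e^x-1)^k,
\]
using the classical identity $\sum_{m\ge k}\stirlingii{m}{k}\,x^m/m! = (e^x-1)^k/k!$ for Stirling numbers of the second kind (itself a one-line consequence of the symbolic product). The geometric sum then gives the same closed form. A third route, if one wants to avoid symbolic methods entirely, is to establish the Fubini recurrence $\operatorname{Bell}(m) = \sum_{j=1}^{m} \binom{m}{j}\operatorname{Bell}(m-j)$ for $m\ge 1$ by conditioning on the first block; this translates directly into the EGF equation $A(x) - 1 = (e^x-1)\,A(x)$, which rearranges to $A(x) = 1/(2-e^x)$.

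I do not expect any real obstacle; the only point worth flagging is convergence bookkeeping. The geometric sum requires $|e^x-1|<1$, so the closed form is analytic on the disk of radius $\ln 2$ around the origin and has a simple pole at $x=\ln 2$. This restriction is harmless for coefficient extraction, and the location of that simple pole is exactly what will later drive the asymptotic $\operatorname{Bell}(m)\sim m!/\bigl(2(\ln 2)^{m+1}\bigr)$ invoked in the paper's analysis of $\Craw{n}$.
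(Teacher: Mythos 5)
Your proposal is correct, and your second derivation (interchanging the double sum and invoking $\sum_{m\ge k}\stirlingii{m}{k}x^m/m!=(e^x-1)^k/k!$) is exactly the paper's proof; the symbolic-method and recurrence routes are just repackagings of the same computation $A(x)=\sum_{k\ge0}(e^x-1)^k=1/(2-e^x)$. The convergence remark about $|e^x-1|<1$ and the pole at $\ln 2$ is a nice touch consistent with how the paper later uses the singularity in \Cref{prop:bell_asymptotics}.
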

\begin{proof}
Using the standard EGF for Stirling numbers, $\sum_{m \ge k} \stirlingii{m}{k} \frac{x^m}{m!} = \frac{(e^x-1)^k}{k!}$, and interchanging summation:
\begin{align*}
A(x) = \sum_{k=0}^{\infty} k! \left( \sum_{m=k}^{\infty} \stirlingii{m}{k} \frac{x^m}{m!} \right)
= \sum_{k=0}^{\infty} k! \left( \frac{(e^x-1)^k}{k!} \right) = \sum_{k=0}^{\infty} (e^x-1)^k
= \frac{1}{1-(e^x-1)} = \frac{1}{2-e^x}.
\end{align*}
\end{proof}

\begin{proposition}
\label{prop:bell_asymptotics}
The ordered Bell numbers have the asymptotic behavior $\operatorname{Bell}(m) \sim \frac{m!}{2(\ln 2)^{m+1}}$ as $m \to \infty$.
\end{proposition}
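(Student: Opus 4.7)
The plan is to apply standard singularity analysis to the exponential generating function $A(x) = 1/(2-e^x)$ established in \Cref{lemma:egf}. First I identify the singularities: $A$ is meromorphic on $\mathbb{C}$ with simple poles at $x_k = \ln 2 + 2\pi i k$ for $k \in \mathbb{Z}$. The one closest to the origin is $x_0 = \ln 2$, while every other pole has modulus $\sqrt{(\ln 2)^2 + 4\pi^2} > 6$, giving a very comfortable spectral gap between the dominant singularity and the rest.

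Next I compute the residue at $x_0$. Since $(2-e^x)'|_{x=\ln 2} = -e^{\ln 2} = -2 \ne 0$, the zero of the denominator is simple and
\[
\operatorname{Res}_{x=\ln 2} A(x) = \frac{1}{-e^{\ln 2}} = -\frac{1}{2},
\]
so the local expansion near $\ln 2$ is $A(x) = \frac{1/2}{\ln 2 - x} + O(1)$.

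Then I peel off the principal part: define $B(x) \coloneqq A(x) - \frac{1/2}{\ln 2 - x}$. Since the singularity at $\ln 2$ has been removed, $B$ is analytic in the disk $|x| < R$ for any $R < \sqrt{(\ln 2)^2 + 4\pi^2}$, so Cauchy's estimate yields $[x^m] B(x) = O(R^{-m})$. The explicit geometric expansion
\[
\frac{1/2}{\ln 2 - x} = \frac{1}{2\ln 2} \sum_{m=0}^{\infty} \left(\frac{x}{\ln 2}\right)^m
\]
gives $[x^m] \frac{1/2}{\ln 2 - x} = \frac{1}{2(\ln 2)^{m+1}}$. Multiplying by $m!$ to pass from the EGF coefficient to $\operatorname{Bell}(m)$ itself then produces
\[
\operatorname{Bell}(m) = \frac{m!}{2(\ln 2)^{m+1}} + O\!\left(m!\, R^{-m}\right),
\]
and choosing any $R \in (\ln 2,\sqrt{(\ln 2)^2 + 4\pi^2})$ makes the error exponentially smaller than the main term, proving the claim.

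There is no serious obstacle here; this is a textbook application of the transfer principle for meromorphic EGFs. The only points requiring care are (i) verifying that $\ln 2$ is strictly the unique singularity of minimum modulus, which is immediate from the periodicity of $\exp$, and (ii) tracking the sign of the residue so that the factor $-1/2$ combines with the $\ln 2 - x$ convention to produce the positive leading term $\frac{m!}{2(\ln 2)^{m+1}}$. One could equivalently derive the same asymptotic from a direct partial-fraction expansion of $1/(2-e^x)$, but the singularity-analysis route is the cleanest and delivers a usable error bound essentially for free.
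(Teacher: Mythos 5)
Your proof is correct and follows essentially the same route as the paper: identify the dominant simple pole of $A(x)=1/(2-e^x)$ at $\ln 2$, compute the residue $-1/2$, and transfer to coefficient asymptotics. The only difference is that you carry out the transfer step explicitly by subtracting the principal part and invoking Cauchy estimates, whereas the paper cites the meromorphic transfer theorem of Flajolet--Sedgewick; your version additionally yields an explicit exponentially small error term.
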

\begin{proof}
The EGF $A(x) = (2-e^x)^{-1}$ has its dominant singularity at $x=\rho=\ln 2$. This is a simple pole with residue $\operatorname{Res}(A; \ln 2) = -1/2$. Applying a standard transfer theorem from analytic combinatorics yields the result. See, e.g., Flajolet and Sedgewick, \emph{Analytic Combinatorics} (2009), Theorem VI.1. \cite{flajolet2009analytic}.
\end{proof}

\subsection{The Centered Case: Partitions without Singletons}
\label{subsec:centered-analysis}

The coefficient $\Ccen{n}$ from \Cref{thm:central_sharp} arises from the subset of partitions where every block has size at least $2$. Let $\PP_{\ge 2}(n)$ denote the set of such partitions of $[n]$. The coefficient is defined as
$\Ccen{n} \coloneqq \sum_{\pi \in \PP_{\ge 2}(n)} (\abs{\pi}-1)!.$

\begin{lemma}
\label{lemma:egf_C0}
The exponential generating function for the sequence $(\Ccen{n})_{n\ge 1}$ is
$A_{\mathrm{cen}}(x) \coloneqq \sum_{n\ge 1} \Ccen{n} \frac{x^n}{n!} = -\log\bigl(2 - e^x + x\bigr).$
\end{lemma}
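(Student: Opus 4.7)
The plan is to compute $A_{\mathrm{cen}}(x)$ by organizing the sum defining $\Ccen{n}$ according to the number of blocks and then applying the exponential formula from enumerative combinatorics. First, writing $k = \abs{\pi}$, I would group by block count to obtain
\[
\Ccen{n} = \sum_{k\ge 1} d(n,k)\,(k-1)!,
\]
where $d(n,k)$ denotes the number of partitions of $[n]$ into exactly $k$ blocks, each of cardinality at least $2$ (the so-called \emph{$2$-associated Stirling numbers of the second kind}).

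Next, I would invoke the standard EGF for a single block of size at least $2$, namely $e^x - 1 - x$, obtained from $e^x$ by removing the empty-set and singleton contributions. By the exponential (set-partition) formula, an unordered collection of $k$ such blocks has EGF $(e^x - 1 - x)^k/k!$, so that
\[
\sum_{n\ge 2k} d(n,k)\,\frac{x^n}{n!} = \frac{(e^x-1-x)^k}{k!}.
\]
Substituting this into the definition of $A_{\mathrm{cen}}$ and exchanging the order of summation gives
\[
A_{\mathrm{cen}}(x) = \sum_{k\ge 1} (k-1)!\cdot \frac{(e^x-1-x)^k}{k!} = \sum_{k\ge 1} \frac{(e^x-1-x)^k}{k}.
\]

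Finally, I would recognize the last series as the Taylor expansion of $-\log(1-u)$ evaluated at $u = e^x - 1 - x$, concluding that
\[
A_{\mathrm{cen}}(x) = -\log\bigl(1 - (e^x - 1 - x)\bigr) = -\log(2 - e^x + x),
\]
as claimed. The proof is essentially a formal power-series manipulation, and the only thing to verify is that the interchange of summations is legitimate. This is immediate because $u(x) = e^x - 1 - x$ has zero constant term and positive radius of convergence, so the geometric and logarithmic series in $u$ converge in a neighborhood of the origin; the identity therefore holds both as an identity of formal power series and as an analytic identity on a disk around $x = 0$. I do not anticipate any serious obstacle beyond stating these standard EGF facts cleanly.
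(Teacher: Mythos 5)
Your proposal is correct and follows essentially the same route as the paper: the paper also identifies the block EGF $e^x-1-x$ and composes it with the logarithmic series $-\log(1-y)$ to account for the weight $(\abs{\pi}-1)!$. Your version merely makes the intermediate step explicit by grouping partitions by block count via the $2$-associated Stirling numbers and summing $(k-1)!\,(e^x-1-x)^k/k!$, which is a slightly more detailed presentation of the identical argument.
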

\begin{proof}
Let
$B_{\ge 2}(x) \coloneqq \sum_{k=2}^{\infty} \frac{x^k}{k!} = e^x - 1 - x$
be the EGF of a single allowed block. A partition into exactly $k$ allowed blocks is an unordered set of $k$ such blocks, so its EGF is $(B_{\ge2}(x))^k/k!$. Grouping partitions by their number of blocks therefore gives
\[
\Ccen{n}=\sum_{k\ge1} (k-1)!\,[x^n/n!]\frac{(B_{\ge2}(x))^k}{k!}.
\]
Summing over $n$ reconstructs the exponential generating function, hence
\[
A_{\mathrm{cen}}(x)=\sum_{k\ge1}(k-1)!\frac{\bigl(B_{\ge2}(x)\bigr)^k}{k!}
=\sum_{k\ge1}\frac{\bigl(B_{\ge2}(x)\bigr)^k}{k}
=-\log\bigl(1-B_{\ge2}(x)\bigr),
\]
which simplifies to the stated form.
\end{proof}

\begin{lemma}
\label{lem:centered-dominant}
Let
$B_{\ge 2}(z)=e^z-1-z=\sum_{m=2}^{\infty}\frac{z^m}{m!},$
and let $\rho_{\mathrm{cen}}>0$ be the unique positive solution of
$B_{\ge 2}(\rho_{\mathrm{cen}})=1,$
equivalently $2-e^{\rho_{\mathrm{cen}}}+\rho_{\mathrm{cen}}=0$.
Then $1-B_{\ge 2}(z)\neq 0$ for all $\abs{z}<\rho_{\mathrm{cen}}$, and on the circle
$\abs{z}=\rho_{\mathrm{cen}}$ the equation $B_{\ge 2}(z)=1$ holds only at $z=\rho_{\mathrm{cen}}$.
Consequently, $\rho_{\mathrm{cen}}$ is the unique dominant singularity of
$A_{\mathrm{cen}}(z)=-\log\!\bigl(1-B_{\ge 2}(z)\bigr).$
\end{lemma}

\begin{proof}
Since $B_{\ge 2}$ has nonnegative Taylor coefficients and $B_{\ge 2}(0)=0$, the function
$r\mapsto B_{\ge 2}(r)$ is strictly increasing for $r>0$. Hence for $\abs{z}<\rho_{\mathrm{cen}}$,
\[
\abs{B_{\ge 2}(z)}
\le
B_{\ge 2}(\abs{z})
<
B_{\ge 2}(\rho_{\mathrm{cen}})
=1,
\]
so $1-B_{\ge 2}(z)\neq 0$ in the open disk.

Now let $\abs{z}=\rho_{\mathrm{cen}}$ and suppose that $B_{\ge 2}(z)=1$. Then
\[
1=\abs{B_{\ge 2}(z)}
\le
\sum_{m=2}^{\infty}\frac{\abs{z}^m}{m!}
=
B_{\ge 2}(\rho_{\mathrm{cen}})
=1,
\]
so equality holds in the triangle inequality. Since the coefficients of $z^2$ and $z^3$
in $B_{\ge 2}(z)$ are both strictly positive, equality forces $z^2$ and $z^3$ to have the same argument.
Therefore $z/\abs{z}=1$, and hence $z=\rho_{\mathrm{cen}}$.
\end{proof}

\begin{proposition}
\label{prop:C0-asymptotics}

Let $\rho_{\mathrm{cen}}$ be the unique positive solution to $2 - e^{\rho_{\mathrm{cen}}} + \rho_{\mathrm{cen}} = 0$ ($\rho_{\mathrm{cen}} \approx 1.146$). Then
$\Ccen{n} \sim \frac{(n-1)!}{\rho_{\mathrm{cen}}^n} \qquad (n\to\infty).$
\end{proposition}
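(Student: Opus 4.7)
The plan is to apply singularity analysis to the EGF $A_{\mathrm{cen}}(x) = -\log(2 - e^x + x)$ from \Cref{lemma:egf_C0}, following the same template as the proof of \Cref{prop:bell_asymptotics} but with a logarithmic singularity replacing the simple pole. I will (i) locate the dominant singularity and verify it is the smallest in modulus, (ii) derive the local singular expansion and continue into a $\Delta$-domain, and (iii) apply a standard coefficient-transfer theorem.

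Setting $g(x) = 2 - e^x + x$, I observe that $g$ is entire, $g(0) = 1 > 0$, and $g'(x) = 1 - e^x < 0$ for $x > 0$, so $g$ is strictly decreasing on $[0,\infty)$; combined with $g(2) = 4 - e^2 < 0$, this gives a unique positive root $\rho_{\mathrm{cen}} \in (0, 2)$ by the intermediate value theorem. To confirm it is the smallest-modulus zero of $g$, suppose $g(z) = 0$ for some $z = x + iy$ with $|z| < \rho_{\mathrm{cen}}$. Then $e^z = 2 + z$ forces $e^x = |2+z| \ge 2 + x$ (with $2 + x > 0$ since $|x| \le \rho_{\mathrm{cen}} < 2$), i.e.\ $h(x) \coloneqq e^x - 2 - x \ge 0$. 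But $h$ is strictly decreasing on $(-\infty, 0]$ and strictly increasing on $[0, \infty)$ with $h(0) = -1$ and $h(\rho_{\mathrm{cen}}) = 0$; its only other real root $-\rho^-$ satisfies $\rho^- > \rho_{\mathrm{cen}}$, since the defining relation $e^{\rho_{\mathrm{cen}}} = 2 + \rho_{\mathrm{cen}}$ gives
\[
h(-\rho_{\mathrm{cen}}) \;=\; \frac{1}{2+\rho_{\mathrm{cen}}} - 2 + \rho_{\mathrm{cen}} \;=\; \frac{\rho_{\mathrm{cen}}^{2} - 3}{2+\rho_{\mathrm{cen}}} \;<\; 0,
\]
where $\rho_{\mathrm{cen}} < \sqrt{3}$ follows from $e^{\sqrt{3}} > 3.73 > 2 + \sqrt{3}$. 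Hence $h < 0$ throughout $(-\rho_{\mathrm{cen}}, \rho_{\mathrm{cen}})$, contradicting $h(x) \ge 0$. A short analogous check rules out zeros on the circle $|z| = \rho_{\mathrm{cen}}$ other than $\rho_{\mathrm{cen}}$ itself (the only candidate $z = -\rho_{\mathrm{cen}}$ gives $g(-\rho_{\mathrm{cen}}) = (3-\rho_{\mathrm{cen}}^{2})/(2+\rho_{\mathrm{cen}}) > 0$), so $\rho_{\mathrm{cen}}$ is the unique dominant singularity.

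Because $g'(\rho_{\mathrm{cen}}) = 1 - e^{\rho_{\mathrm{cen}}} = -(1+\rho_{\mathrm{cen}}) \ne 0$, this zero is simple, and I can factor $g(x) = (1+\rho_{\mathrm{cen}})(\rho_{\mathrm{cen}} - x)\,\widetilde{G}(x)$ with $\widetilde{G}$ analytic near $\rho_{\mathrm{cen}}$ and $\widetilde{G}(\rho_{\mathrm{cen}}) = 1$, giving the singular expansion
\[
A_{\mathrm{cen}}(x) \;=\; -\log\!\bigl(1 - x/\rho_{\mathrm{cen}}\bigr) \;+\; H(x),
\]
where $H$ is analytic in an open neighborhood of $\rho_{\mathrm{cen}}$. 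Since $g$ has no other zeros in a slightly enlarged disk (by the dominance check together with compactness and the discreteness of the zero set of an entire function), $A_{\mathrm{cen}}$ extends analytically to a standard $\Delta$-domain at $\rho_{\mathrm{cen}}$. The transfer theorem for logarithmic singularities (\cite{flajolet2009analytic}, Chapter VI) then yields $[x^{n}](-\log(1 - x/\rho_{\mathrm{cen}})) = \rho_{\mathrm{cen}}^{-n}/n$ and $[x^{n}] H(x) = O\bigl((\rho_{\mathrm{cen}} + \varepsilon)^{-n}\bigr)$ for some $\varepsilon > 0$, so $\Ccen{n}/n! \sim \rho_{\mathrm{cen}}^{-n}/n$, i.e.\ $\Ccen{n} \sim (n-1)!/\rho_{\mathrm{cen}}^{n}$. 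The main obstacle is the dominance verification in step (i); once it is in hand, the local expansion and transfer-theorem application are routine and exactly parallel the Bell-number case.
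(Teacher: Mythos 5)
Your proof is correct and follows the same route as the paper — singularity analysis of the EGF $A_{\mathrm{cen}}(x) = -\log(2 - e^x + x)$ with a logarithmic transfer theorem — but you additionally verify that $\rho_{\mathrm{cen}}$ is the unique singularity of smallest modulus (via $e^{\Re z} = |2+z| \ge 2 + \Re z$ and the sign analysis of $h(x)=e^x-2-x$), a dominance step the paper's proof leaves implicit. The only blemish is the chain $e^{\sqrt{3}} > 3.73 > 2+\sqrt{3}$: the middle inequality is false since $2+\sqrt{3} \approx 3.732$, but your conclusion $\rho_{\mathrm{cen}} < \sqrt{3}$ still holds because $e^{\sqrt{3}} \approx 5.65 > 2+\sqrt{3}$.
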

\begin{proof}
By \Cref{lem:centered-dominant}, the unique dominant singularity of
$A_{\mathrm{cen}}(x)=-\log\bigl(2-e^x+x\bigr)$
is the positive real number $\rho_{\mathrm{cen}}$ satisfying
$2-e^{\rho_{\mathrm{cen}}}+\rho_{\mathrm{cen}}=0.$
Since
\[
\frac{d}{dx}\bigl(2-e^x+x\bigr)\Big|_{x=\rho_{\mathrm{cen}}}
=
1-e^{\rho_{\mathrm{cen}}}
\neq 0,
\]
the zero at $x=\rho_{\mathrm{cen}}$ is simple. Therefore, as $x\to \rho_{\mathrm{cen}}$,
\[
2-e^x+x
=
(1-e^{\rho_{\mathrm{cen}}})(x-\rho_{\mathrm{cen}})
+O\bigl((x-\rho_{\mathrm{cen}})^2\bigr)
=
C(\rho_{\mathrm{cen}}-x)\bigl(1+o(1)\bigr),
\]
where $C=e^{\rho_{\mathrm{cen}}}-1>0$. Hence
\[
A_{\mathrm{cen}}(x)
=
-\log\bigl(2-e^x+x\bigr)
=
-\log\!\left(1-\frac{x}{\rho_{\mathrm{cen}}}\right)
+O(1)
\qquad (x\to \rho_{\mathrm{cen}}).
\]
Applying standard singularity analysis for logarithmic singularities
(see \cite[Theorem VI.2]{flajolet2009analytic}) yields
$[x^n]A_{\mathrm{cen}}(x)\sim \frac{1}{n\,\rho_{\mathrm{cen}}^n}.$
Since
$A_{\mathrm{cen}}(x)=\sum_{n\ge 1}\Ccen{n}\frac{x^n}{n!},$
it follows that
$\Ccen{n}\sim \frac{(n-1)!}{\rho_{\mathrm{cen}}^n}.$
\end{proof}

\subsection{The Symmetric Case: Partitions with Even-Sized Blocks}
\label{subsec:Csym-analysis}

The coefficient $\Csym{n}$ from \Cref{thm:symmetric-refinement} arises from the subset of partitions where every block has even cardinality. Let $\PP_{\mathrm{even}}(n)$ denote the set of such partitions of $[n]$. The coefficient is defined as
$\Csym{n} \coloneqq \sum_{\pi \in \PP_{\mathrm{even}}(n)} (\abs{\pi}-1)!.$

\begin{lemma}
\label{lemma:egf_Csym}
The exponential generating function for the sequence $(\Csym{n})_{n\ge 1}$ is
$A_{\mathrm{sym}}(x) \coloneqq \sum_{n\ge 1} \Csym{n} \frac{x^n}{n!} = -\log\bigl(2-\cosh x\bigr).$
\end{lemma}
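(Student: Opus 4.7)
The proof will be a direct parallel of \Cref{lemma:egf_C0}, with only the class of admissible blocks changed. The plan is therefore template-driven: identify the EGF of the admissible blocks, then compose with the logarithmic weight to account for the factor $(|\pi|-1)!$ per partition.

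First, I would write down the EGF of the allowed block class. Since $\Csym{n}$ sums over partitions whose every block has even cardinality (necessarily $\ge 2$), the admissible block class has EGF
\[
B_{\mathrm{even}}(x) \coloneqq \sum_{k=1}^{\infty} \frac{x^{2k}}{(2k)!} = \cosh x - 1,
\]
which is the standard identity $\cosh x = \sum_{k\ge 0} x^{2k}/(2k)!$ shifted to remove the empty block.

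Second, I would invoke the same "logarithmic composition" mechanism that was used for $\Ccen{n}$. For any admissible block class with EGF $B(x)$, the EGF counting partitions of $[n]$ into exactly $k$ labelled blocks of allowed type is $B(x)^k/k!$ (the \textsc{Set} construction on labelled blocks). Weighting each such partition by the factor $(k-1)! = (|\pi|-1)!$ and summing over $k \ge 1$ gives
\[
\sum_{k\ge 1} (k-1)! \cdot \frac{B(x)^k}{k!} = \sum_{k\ge 1} \frac{B(x)^k}{k} = -\log\bigl(1 - B(x)\bigr).
\]
Applying this with $B = B_{\mathrm{even}}$ yields
\[
A_{\mathrm{sym}}(x) = -\log\bigl(1 - (\cosh x - 1)\bigr) = -\log\bigl(2 - \cosh x\bigr),
\]
which is the claimed formula.

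Finally, I would remark briefly on formal validity: since $B_{\mathrm{even}}(x) = x^2/2 + O(x^4)$ vanishes at the origin, the composition $-\log(1-B_{\mathrm{even}}(x))$ is well-defined as a formal power series with no constant term, so coefficient extraction is unambiguous and identifies $[x^n/n!]\,A_{\mathrm{sym}}(x)$ with $\Csym{n}$ by construction. I do not expect any real obstacle; the one thing to be careful about is correctly indexing the even-block EGF (starting at $k=1$, i.e.\ $2k = 2$) so that the empty block is excluded, exactly as in the centered case where the linear term $x$ is also subtracted from $e^x - 1$.
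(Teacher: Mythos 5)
Your proposal is correct and follows essentially the same route as the paper: identify the even-block EGF $\cosh x - 1$ and compose with $-\log(1-y)$ to account for the weight $(\abs{\pi}-1)!$. The extra details you supply (the explicit sum $\sum_{k\ge1}(k-1)!\,B(x)^k/k!$ and the remark that $B_{\mathrm{even}}$ has no constant term, so the composition is a well-defined formal power series) are sound and only make the argument more self-contained.
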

\begin{proof}
Let
$B_{\mathrm{even}}(x) \coloneqq \sum_{m\ge 1} \frac{x^{2m}}{(2m)!} = \cosh x - 1$
be the EGF of a single allowed block. A partition into exactly $k$ even-sized blocks is an unordered set of $k$ such blocks, so its EGF is $(B_{\mathrm{even}}(x))^k/k!$. Grouping partitions by their number of blocks therefore gives
\[
\Csym{n}=\sum_{k\ge1} (k-1)!\,[x^n/n!]\frac{(B_{\mathrm{even}}(x))^k}{k!}.
\]
Summing over $n$ reconstructs the exponential generating function, hence
\[
A_{\mathrm{sym}}(x)=\sum_{k\ge1}(k-1)!\frac{\bigl(B_{\mathrm{even}}(x)\bigr)^k}{k!}
=\sum_{k\ge1}\frac{\bigl(B_{\mathrm{even}}(x)\bigr)^k}{k}
=-\log\bigl(1-B_{\mathrm{even}}(x)\bigr),
\]
which simplifies to the stated form.
\end{proof}

\begin{lemma}
\label{lem:symmetric-dominant}
Let
$B_{\mathrm{even}}(z)=\cosh z-1=\sum_{m=1}^{\infty}\frac{z^{2m}}{(2m)!},$
and let $\rho_{\mathrm{sym}}=\operatorname{arcosh}2$, so that
$B_{\mathrm{even}}(\rho_{\mathrm{sym}})=1.$
Then $1-B_{\mathrm{even}}(z)\neq 0$ for all $\abs{z}<\rho_{\mathrm{sym}}$, and on the circle
$\abs{z}=\rho_{\mathrm{sym}}$ the equation $B_{\mathrm{even}}(z)=1$ holds only at
$z=\pm \rho_{\mathrm{sym}}$. Consequently, the dominant singularities of
$A_{\mathrm{sym}}(z)=-\log\!\bigl(1-B_{\mathrm{even}}(z)\bigr)$
are exactly $\pm \rho_{\mathrm{sym}}$.
\end{lemma}

\begin{proof}
Since $B_{\mathrm{even}}$ has nonnegative Taylor coefficients and $B_{\mathrm{even}}(0)=0$,
for $\abs{z}<\rho_{\mathrm{sym}}$ we have
\[
\abs{B_{\mathrm{even}}(z)}
\le
B_{\mathrm{even}}(\abs{z})
<
B_{\mathrm{even}}(\rho_{\mathrm{sym}})
=1,
\]
so $1-B_{\mathrm{even}}(z)\neq 0$ in the open disk.

Now let $\abs{z}=\rho_{\mathrm{sym}}$ and suppose that $B_{\mathrm{even}}(z)=1$. Then
\[
1=\abs{B_{\mathrm{even}}(z)}
\le
\sum_{m=1}^{\infty}\frac{\abs{z}^{2m}}{(2m)!}
=
B_{\mathrm{even}}(\rho_{\mathrm{sym}})
=1,
\]
so equality holds in the triangle inequality. Since the coefficients of $z^2$ and $z^4$
are both strictly positive, equality forces $z^2$ and $z^4$ to have the same argument.
Thus $z^2/\abs{z}^2=1$, which implies $z=\pm \rho_{\mathrm{sym}}$.
\end{proof}

\begin{proposition}
\label{prop:Csym-asymptotics}
Let $\rho_{\mathrm{sym}} = \operatorname{arcosh} 2 = \ln(2+\sqrt{3})$ ($\rho_{\mathrm{sym}} \approx 1.317$). Then, for even $n=2m$,
$\Csym{2m} \sim 2\frac{(2m-1)!}{\rho_{\mathrm{sym}}^{2m}} \qquad (m\to\infty),$
and $\Csym{n}=0$ for all odd $n$.
\end{proposition}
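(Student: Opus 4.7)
The plan is to apply singularity analysis to the EGF $A_{\mathrm{sym}}(x) = -\log(2-\cosh x)$ from \Cref{lemma:egf_Csym}, paralleling the analysis of $\Ccen{n}$ in \Cref{prop:C0-asymptotics}. The essential new feature is that $A_{\mathrm{sym}}(x)$ is an \emph{even} function of $x$, which creates a pair of dominant singularities at $\pm\rho_{\mathrm{sym}}$; this is what produces the factor of $2$ in the leading coefficient and simultaneously forces every odd-indexed coefficient to vanish.

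First I would note that since $\cosh$ is even, so is $A_{\mathrm{sym}}$, and therefore $[x^n]A_{\mathrm{sym}}(x) = 0$ for every odd $n$, yielding $\Csym{n}=0$ for odd $n$ immediately. Next I would locate the dominant singularities by solving $2 - \cosh x = 0$: on the real line the smallest positive root is $\rho_{\mathrm{sym}} = \operatorname{arcosh} 2 = \log(2+\sqrt{3})$, and by evenness $-\rho_{\mathrm{sym}}$ is the other root on the same circle. A routine check that $2-\cosh z$ has no other zeros with $|z|\le \rho_{\mathrm{sym}}$ confirms that $\pm\rho_{\mathrm{sym}}$ are the sole dominant singularities. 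Since $\sinh(\pm\rho_{\mathrm{sym}}) = \pm\sqrt{3} \ne 0$, each is a simple zero of $2-\cosh x$, so $A_{\mathrm{sym}}$ carries purely logarithmic singularities at each.

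The cleanest extraction of the asymptotic then uses the factorization
\[
2-\cosh x = (\rho_{\mathrm{sym}}^2 - x^2)\, h(x),
\]
where $h$ is analytic and non-vanishing on an open disk of radius strictly greater than $\rho_{\mathrm{sym}}$. This gives $A_{\mathrm{sym}}(x) = -\log\bigl(1 - x^2/\rho_{\mathrm{sym}}^2\bigr) + g(x)$ for some $g$ analytic on that larger disk; expanding the first term as $\sum_{k\ge 1} x^{2k}/(k\,\rho_{\mathrm{sym}}^{2k})$ gives
\[
[x^{2m}]A_{\mathrm{sym}}(x) = \frac{1}{m\,\rho_{\mathrm{sym}}^{2m}} + O(R^{-2m})
\]
for some $R>\rho_{\mathrm{sym}}$. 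Multiplying by $(2m)!$ and using $(2m)!/m = 2\,(2m-1)!$ yields $\Csym{2m} \sim 2(2m-1)!/\rho_{\mathrm{sym}}^{2m}$, as claimed.

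The main obstacle, and essentially the only departure from the centered case, is correctly handling the double singularity. The factorization route above sidesteps this by reducing to a single logarithmic singularity in the variable $x^2$ plus an analytic remainder. Equivalently, one could apply the standard logarithmic transfer theorem \cite[Theorem VI.2]{flajolet2009analytic} separately at each of $\pm\rho_{\mathrm{sym}}$ and sum the contributions, producing a factor $1+(-1)^n$ that automatically vanishes for odd $n$ and doubles to $2$ for even $n$; both approaches give the same final answer.
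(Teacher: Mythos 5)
Your proposal is correct and follows essentially the same route as the paper: singularity analysis of the EGF $A_{\mathrm{sym}}(x)=-\log(2-\cosh x)$ at its two dominant simple-zero singularities $\pm\rho_{\mathrm{sym}}$, whose logarithmic contributions add for even $n$ (giving the factor $2$) and cancel for odd $n$. Your factorization $2-\cosh x=(\rho_{\mathrm{sym}}^2-x^2)h(x)$ is a slightly more self-contained way of combining the two singularities than the paper's direct appeal to the transfer theorem, but the substance is identical.
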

\begin{proof}
By \Cref{lem:symmetric-dominant}, the dominant singularities of
$A_{\mathrm{sym}}(x)=-\log\bigl(2-\cosh x\bigr)$
are exactly $x=\pm \rho_{\mathrm{sym}}$, where $\rho_{\mathrm{sym}}=\operatorname{arcosh}2$.

Since
\[
\frac{d}{dx}\bigl(2-\cosh x\bigr)\Big|_{x=\rho_{\mathrm{sym}}}
=
-\sinh(\rho_{\mathrm{sym}})
\neq 0,
\]
the zero at $x=\rho_{\mathrm{sym}}$ is simple. Thus, as $x\to \rho_{\mathrm{sym}}$,
\[
2-\cosh x
=
-\sinh(\rho_{\mathrm{sym}})(x-\rho_{\mathrm{sym}})
+O\bigl((x-\rho_{\mathrm{sym}})^2\bigr)
=
C(\rho_{\mathrm{sym}}-x)\bigl(1+o(1)\bigr),
\]
where $C=\sinh(\rho_{\mathrm{sym}})>0$. Hence
\[
A_{\mathrm{sym}}(x)
=
-\log\bigl(2-\cosh x\bigr)
=
-\log\!\left(1-\frac{x}{\rho_{\mathrm{sym}}}\right)
+O(1)
\qquad (x\to \rho_{\mathrm{sym}}).
\]
Similarly, as $x\to -\rho_{\mathrm{sym}}$,
$A_{\mathrm{sym}}(x)
=
-\log\!\left(1+\frac{x}{\rho_{\mathrm{sym}}}\right)
+O(1).$
Therefore, by standard singularity analysis for logarithmic singularities,
\[
[x^{2m}]A_{\mathrm{sym}}(x)
\sim
\frac{1}{2m\,\rho_{\mathrm{sym}}^{2m}}
+
\frac{1}{2m\,(-\rho_{\mathrm{sym}})^{2m}}
=
\frac{2}{2m\,\rho_{\mathrm{sym}}^{2m}}.
\]
Since
$A_{\mathrm{sym}}(x)=\sum_{n\ge 1}\Csym{n}\frac{x^n}{n!},$
it follows that, for even $n=2m$,
$\Csym{2m}\sim 2\frac{(2m-1)!}{\rho_{\mathrm{sym}}^{2m}}.$
Finally, $\Csym{n}=0$ for odd $n$ because no partition of an odd integer can consist solely of even-sized blocks.
\end{proof}

\section{Impossibility of a Converse Bound}
\label{sec:converse}

A natural question is whether a universal reverse inequality exists, that is, whether $\abs{\kappa_n(X)} \ge D_n \E{\abs{X}^n}$ for some constant $D_n > 0$. The answer is negative for all $n \ge 2$.

\begin{remark}
\label{rem:no-converse}
For any integer $n \ge 2$, there exists no constant $D_n > 0$ such that
$\abs{\kappa_n(X)} \ge D_n \E{\abs{X}^n}$
holds for all real-valued random variables $X$ with finite $n$-th moment.
\end{remark}
\begin{proof}
For $n=2$, take the constant random variable $X\equiv 1$. Then
$\kappa_2(X)=0,
\qquad
\E{\abs{X}^2}=1,$
so no such constant $D_2>0$ can exist.

Now fix $n\ge 3$ and let $X\sim \mathcal{N}(0,1)$. Then $\kappa_n(X)=0$, while
$\E{\abs{X}^n}>0.$
Thus no such constant $D_n>0$ can exist for $n\ge 3$ either.
\end{proof}

This impossibility arises because algebraic cancellations in the moment--cumulant formula can force $\kappa_n$ to vanish even when moments are large. However, a converse-type bound is possible for the \emph{signed} raw or central moment if we control that moment using the maximum of the normalized cumulants up to order $n$.

Let $\PP_0(n)$ denote the set of partitions of $[n]$ with no singleton blocks.
Let $B_n = |\PP(n)|$ be the ordinary Bell number, and let $B_n^{(0)} = |\PP_0(n)|$ be the number of partitions with no singleton blocks. Define the \emph{cumulant envelope} as $\mathcal{K}_n(X) \coloneqq \max_{1 \le j \le n} \abs{\kappa_j(X)}^{n/j}$.

\begin{proposition}
\label{prop:raw-lower}
For any $n \ge 1$ and $X$ with $\E{\abs{X}^n} < \infty$, the raw moment $m_n = \E{X^n}$ satisfies
\begin{equation}
\label{eq:raw-lower}
\abs{m_n} \le B_n \mathcal{K}_n(X).
\end{equation}
\end{proposition}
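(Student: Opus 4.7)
The plan is to mirror the argument used for \Cref{thm:main}, but applied to the \emph{inverse} moment--cumulant relation. Recall that \eqref{eq:mom-cum-formula} is obtained by Möbius inversion from the simpler forward formula expressing moments in terms of cumulants,
\[
m_n = \sum_{\pi \in \PP(n)} \prod_{B \in \pi} \kappa_{\abs{B}}(X),
\]
which has no alternating signs. This is the identity I would invoke first; it is classical and is the companion of \Cref{lem:mom-cum} under Möbius inversion on the partition lattice (see the references already cited after \Cref{lem:mom-cum}). The absence of the alternating $(-1)^{\abs{\pi}-1}(\abs{\pi}-1)!$ factor is precisely why the envelope estimate will come out with weight $B_n$ (pure partition count) rather than $2\operatorname{Bell}(n-1)$.

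Next, I would take absolute values and apply the triangle inequality to obtain
\[
\abs{m_n} \le \sum_{\pi \in \PP(n)} \prod_{B \in \pi} \abs{\kappa_{\abs{B}}(X)}.
\]
The definition of the cumulant envelope gives $\abs{\kappa_j(X)} \le \mathcal{K}_n(X)^{j/n}$ for every $1 \le j \le n$, which is the exact dual of the Lyapunov-based inequality $\mu_j \le \mu_n^{j/n}$ used in \Cref{lem:product_collapse}. Substituting this block-by-block and using that block sizes in any $\pi \in \PP(n)$ sum to $n$ yields
\[
\prod_{B \in \pi} \abs{\kappa_{\abs{B}}(X)} \le \prod_{B \in \pi} \mathcal{K}_n(X)^{\abs{B}/n} = \mathcal{K}_n(X).
\]

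Summing this uniform bound over all $\pi \in \PP(n)$ gives a factor of $\abs{\PP(n)} = B_n$, which is exactly the desired inequality \eqref{eq:raw-lower}. There is no substantive obstacle here; the argument is a clean dualization of the proof of \Cref{thm:main}, with the envelope $\mathcal{K}_n$ playing the role of $\mu_n$ and the forward moment--cumulant formula replacing \Cref{lem:mom-cum}. The only care needed is to make sure the forward identity is stated cleanly and that the bound $\abs{\kappa_j} \le \mathcal{K}_n^{j/n}$ is derived directly from the definition $\mathcal{K}_n = \max_{1 \le j \le n} \abs{\kappa_j}^{n/j}$ before being applied under the product.
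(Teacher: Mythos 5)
Your proposal is correct and follows essentially the same route as the paper's proof: invoke the forward (inverse) moment--cumulant formula $m_n = \sum_{\pi \in \PP(n)} \prod_{B \in \pi} \kappa_{\abs{B}}(X)$, bound each block factor by $\mathcal{K}_n(X)^{\abs{B}/n}$ using the definition of the envelope, collapse the product to $\mathcal{K}_n(X)$ since block sizes sum to $n$, and count the $B_n$ partitions. The additional remarks on why the weight is $B_n$ rather than $2\operatorname{Bell}(n-1)$ are accurate but not needed for the argument.
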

\begin{proof}
The inverse moment--cumulant formula is
\[
m_n = \sum_{\pi \in \PP(n)} \prod_{B \in \pi} \kappa_{\abs{B}}(X)
\]
(see, e.g., \cite{leonov1959method, speed1983cumulants, mccullagh1987tensor}). By definition,
$\abs{\kappa_j(X)} \le \mathcal{K}_n(X)^{j/n}$. Thus, for any partition $\pi$, the product term is bounded by
$\prod_{B \in \pi} \mathcal{K}_n(X)^{\abs{B}/n} = \mathcal{K}_n(X)^{\sum \abs{B}/n} = \mathcal{K}_n(X)$.
Summing over the $B_n$ partitions in $\PP(n)$ yields the result.
\end{proof}

\begin{proposition}
\label{prop:centered-lower}
For any $n \ge 2$, the central moment $m_n^{(c)} = \E{(X-\E{X})^n}$ satisfies
\begin{equation}
\label{eq:centered-lower}
\abs{m_n^{(c)}} \le B_n^{(0)} \mathcal{K}_n(X).
\end{equation}
\end{proposition}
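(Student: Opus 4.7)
The plan is to mimic the proof of \Cref{prop:raw-lower} but applied to the centered variable $X' = X - \E{X}$, exploiting the vanishing of $\kappa_1(X')$ to restrict the partition sum from $\PP(n)$ to $\PP_0(n)$.

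First I would write the inverse moment--cumulant formula for $X'$:
\[
m_n^{(c)} \;=\; \E{(X-\E{X})^n} \;=\; \E{X'^n} \;=\; \sum_{\pi \in \PP(n)} \prod_{B \in \pi} \kappa_{\abs{B}}(X').
\]
Next I would observe that $\kappa_1(X') = \E{X'} = 0$, so every partition $\pi$ containing a singleton block contributes a zero product to the sum. Hence the sum collapses to $\PP_0(n)$, the set of partitions of $[n]$ with no singleton blocks, which has cardinality $B_n^{(0)}$.

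Then I would use shift invariance of higher cumulants: for $j \ge 2$, $\kappa_j(X') = \kappa_j(X)$. Since every block remaining in the sum has size $\abs{B} \ge 2$, each factor in the product is a genuine cumulant of $X$ of order at least $2$, and is bounded using the definition of the envelope as
\[
\abs{\kappa_{\abs{B}}(X')} \;=\; \abs{\kappa_{\abs{B}}(X)} \;\le\; \mathcal{K}_n(X)^{\abs{B}/n}.
\]
Taking products over a partition $\pi$ of $[n]$ and using $\sum_{B \in \pi} \abs{B} = n$ collapses each product to $\mathcal{K}_n(X)$, exactly as in the proof of \Cref{prop:raw-lower}. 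Applying the triangle inequality and summing $B_n^{(0)}$ identical bounds then yields $\abs{m_n^{(c)}} \le B_n^{(0)} \mathcal{K}_n(X)$.

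I do not expect a significant obstacle: the argument is a routine adaptation of \Cref{prop:raw-lower}, with the single conceptual step being that centering kills all singleton contributions in the inverse formula (the exact analogue, on the moment-to-cumulant-to-moment side, of the singleton-killing that gave \Cref{thm:central_sharp}). The only thing to be careful about is invoking shift invariance only for $j \ge 2$, which is fine because the surviving partitions use no singleton blocks.
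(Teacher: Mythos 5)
Your proposal is correct and follows essentially the same route as the paper: apply the inverse moment--cumulant formula to the centered variable, use $\kappa_1(X-\E{X})=0$ to restrict the sum to $\PP_0(n)$, and then bound each product by $\mathcal{K}_n(X)$ exactly as in \Cref{prop:raw-lower}. Your explicit remark that shift invariance $\kappa_j(X-\E{X})=\kappa_j(X)$ for $j\ge 2$ is what lets the envelope of $X$ (rather than of the centered variable) appear is a small but welcome clarification that the paper leaves implicit.
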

\begin{proof}
Applying the inverse formula to the centered variable $Y=X-\E{X}$, we note that $\kappa_1(Y)=0$. Consequently, any partition containing a singleton vanishes. The sum restricts to $\PP_0(n)$, and the bound follows by the same argument as in \Cref{prop:raw-lower}.
\end{proof}

\begin{remark}
\label{rem:gaussian-salvaged}
These bounds resolve the contradiction in \Cref{rem:no-converse}. For a Gaussian $X \sim \mathcal{N}(0, \sigma^2)$ and $n=4$, $\kappa_4(X)=0$. A bound depending solely on $\kappa_4$ fails. However, \Cref{prop:centered-lower} relies on the envelope: $\mathcal{K}_4(X) = \max(0, \abs{\kappa_2}^2, 0, 0) = \sigma^4$. We correctly obtain $m_4^{(c)} = 3\sigma^4 \le B_4^{(0)} \sigma^4$, which holds since $B_4^{(0)}=4$ and $3 \le 4$.
\end{remark}

\section{Universal Bounds for Joint Cumulants}
\label{sec:multivariate}

The bounds extend naturally to joint cumulants.

\subsection{Setup and Partition Formula}

Let $\bm X=(X_1,\dots,X_d)$ be a random vector and let $\bm\nu=(\nu_1,\dots,\nu_d)\in\Nzero^d$ be a multi-index of total order $N=\sum_{j=1}^d \nu_j$. The joint cumulant $\kappa_{\bm\nu}(\bm X)$ is defined via the partition formula on a set of $N$ indices corresponding to the variables. To formalize this, consider a mapping $\sigma: \{1, \dots, N\} \to \{1, \dots, d\}$ that associates each integer with a variable index, such that $|\sigma^{-1}(j)| = \nu_j$ for each $j \in \{1,\dots,d\}$. The joint cumulant admits a partition representation analogous to \Cref{lem:mom-cum} (see, e.g., \cite{mccullagh1987tensor}):
\begin{equation}
\label{eq:multi-partition}
\kappa_{\bm\nu}(\bm X) = \sum_{\pi\in \PP(N)} (-1)^{\abs{\pi}-1}(\abs{\pi}-1)! \prod_{B\in\pi} \mathsf{E}\Biggl[\prod_{k \in B} X_{\sigma(k)}\Biggr].
\end{equation}
The inner expectation can be written more compactly as $\E{\prod_{j=1}^d X_j^{n_{j,B}}}$, where $n_{j,B} = |\{k \in B : \sigma(k)=j\}|$ is the number of indices in block $B$ that correspond to variable $X_j$.

\begin{remark}
\label{rem:multi-shift}
For total order $N\ge 2$, $\kappa_{\bm\nu}(X_1,\dots,X_d)=\kappa_{\bm\nu}(X_1-\E{X_1},\dots,X_d-\E{X_d})$. Thus, all centered multivariate refinements below apply to any $\bm X$ after replacing each component by its centered version.
\end{remark}

\subsection{Blockwise H\"older and Product Collapse}

\begin{lemma}
\label{lem:multi-holder}
Assume $\E{\abs{X_j}^N}<\infty$ for all $j$. For any block $B$,
\[
\Biggl|\mathsf{E}\Bigl[\prod_{j=1}^d X_j^{n_{j,B}}\Bigr]\Biggr|
\le \prod_{j: n_{j,B}>0} \bigl(\E{\abs{X_j}^N}\bigr)^{n_{j,B}/N}.
\]
\end{lemma}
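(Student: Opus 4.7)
The plan is to reduce the multivariate estimate to the classical weighted Hölder inequality, handling the mild complication that the block exponents $n_{j,B}$ sum to $|B|$ rather than to $N$. First, by Jensen's inequality applied to $|\cdot|$ (equivalently the triangle inequality for the integral), I pass the absolute value inside the expectation:
\[
\Bigl|\mathsf{E}\Bigl[\prod_{j=1}^d X_j^{n_{j,B}}\Bigr]\Bigr| \le \mathsf{E}\Bigl[\prod_{j=1}^d |X_j|^{n_{j,B}}\Bigr].
\]
It thus suffices to bound the right-hand side by $\prod_{j: n_{j,B}>0} (\E{|X_j|^N})^{n_{j,B}/N}$.

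Next, since the exponents satisfy $\sum_j n_{j,B} = |B| \le N$, I first inflate them so they sum to $N$. Set $Y \coloneqq \prod_j |X_j|^{n_{j,B}}$, a nonnegative random variable, and apply Lyapunov's inequality (as in \Cref{rem:lyapunov_proof}) with exponent $N/|B|\ge 1$ to obtain $\E{Y} \le \bigl(\E{Y^{N/|B|}}\bigr)^{|B|/N}$. Writing $Y^{N/|B|} = \prod_j |X_j|^{\alpha_j N}$ with $\alpha_j \coloneqq n_{j,B}/|B|$ (so that $\sum_{j:n_{j,B}>0}\alpha_j = 1$), the task reduces to estimating $\E{\prod_j (|X_j|^N)^{\alpha_j}}$.

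Finally, I apply the weighted Hölder inequality to this expectation, using conjugate exponents $p_j = 1/\alpha_j$ for the indices $j$ with $n_{j,B} > 0$; the compatibility $\sum_j 1/p_j = 1$ is exactly $\sum_j \alpha_j = 1$. This yields
\[
\mathsf{E}\Bigl[\prod_j (|X_j|^N)^{\alpha_j}\Bigr] \le \prod_{j: n_{j,B}>0} \bigl(\E{|X_j|^N}\bigr)^{\alpha_j}.
\]
Taking the $|B|/N$-th root and substituting $\alpha_j \cdot |B|/N = n_{j,B}/N$ gives exactly the claimed bound. The only conceptual wrinkle is the mismatch $|B|\le N$, which the Lyapunov step resolves; equivalently, one could cite the generalized Hölder inequality that directly admits $\sum 1/p_j \le 1$, collapsing the two steps into one without changing the substance of the argument.
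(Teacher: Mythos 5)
Your proof is correct and follows essentially the same route as the paper's: both reduce the claim to the generalized H\"older inequality with exponents $N/n_{j,B}$ over the indices $j$ with $n_{j,B}>0$. The only cosmetic difference is how the slack $|B|<N$ is absorbed --- you normalize first via a Lyapunov step with exponent $N/|B|$, while the paper pads the H\"older application with the constant function $1$ at exponent $N/(N-|B|)$; the two devices are equivalent, as you yourself note in your closing remark.
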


\begin{proof}
Fix a block $B$. Let $J_B=\{j: n_{j,B}>0\}$ and $|B|=\sum_{j\in J_B} n_{j,B}$. For $j\in J_B$ set $p_{j,B}=N/n_{j,B}$. Then $\sum_{j\in J_B} 1/p_{j,B} = |B|/N \le 1$. If $|B|=N$, the sum equals $1$ and generalized H\"older's inequality gives
\[
\mathsf{E}\Biggl[\prod_{j\in J_B} |X_j|^{n_{j,B}}\Biggr] \le \prod_{j\in J_B} \norm*{|X_j|^{n_{j,B}}}_{p_{j,B}} = \prod_{j\in J_B} \bigl(\E{|X_j|^N}\bigr)^{n_{j,B}/N}.
\]
If $|B|<N$, we apply generalized H\"older to the family $\{|X_j|^{n_{j,B}}\}_{j\in J_B}$ together with the constant function $1$, using the exponents $\{p_{j,B}\}_{j\in J_B}$ and $r_B = N/(N-|B|)$, which satisfy $\sum_{j\in J_B} (1/p_{j,B}) + 1/r_B = 1$. Since $\norm{1}_{r_B}=1$ on a probability space, the same bound follows. Taking absolute values completes the proof.
\end{proof}

\begin{lemma}
\label{lem:multi-collapse}
Under the assumptions of \Cref{lem:multi-holder},
\[
\prod_{B\in\pi} \Biggl|\mathsf{E}\Bigl[\prod_{j=1}^d X_j^{n_{j,B}}\Bigr]\Biggr|
\le \prod_{j=1}^d \bigl(\E{\abs{X_j}^N}\bigr)^{\nu_j/N}.
\]
\end{lemma}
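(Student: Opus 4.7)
The plan is to obtain this multivariate product collapse by pointwise application of \Cref{lem:multi-holder} to every block of $\pi$ and then tracking how the exponents accumulate across the partition. The single nontrivial observation will be that, for each fixed variable index $j$, the counts $n_{j,B}$ satisfy $\sum_{B\in\pi} n_{j,B} = \nu_j$, which is exactly the statement that the partition $\pi$ of $[N]$ distributes the $\nu_j$ indices assigned to $X_j$ (i.e., the preimage $\sigma^{-1}(j)$) across its blocks.

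Concretely, I would first bound each factor on the left-hand side using \Cref{lem:multi-holder}, obtaining
\[
\prod_{B\in\pi} \Biggl|\mathsf{E}\Bigl[\prod_{j=1}^d X_j^{n_{j,B}}\Bigr]\Biggr|
\le \prod_{B\in\pi} \prod_{j:\, n_{j,B}>0} \bigl(\E{\abs{X_j}^N}\bigr)^{n_{j,B}/N}.
\]
Then I would interchange the order of the double product. Since the restriction $n_{j,B}>0$ only removes factors equal to $\bigl(\E{|X_j|^N}\bigr)^{0}=1$, I may drop it and rewrite the right-hand side as
\[
\prod_{j=1}^d \prod_{B\in\pi} \bigl(\E{\abs{X_j}^N}\bigr)^{n_{j,B}/N}
= \prod_{j=1}^d \bigl(\E{\abs{X_j}^N}\bigr)^{\bigl(\sum_{B\in\pi} n_{j,B}\bigr)/N}.
\]

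Finally I would invoke the partition identity $\sum_{B\in\pi} n_{j,B} = \nu_j$ for each $j\in\{1,\dots,d\}$, which holds because the blocks of $\pi$ partition the $N$ indices and exactly $\nu_j$ of those indices are mapped by $\sigma$ to $j$. Substituting this into the exponent gives the claimed bound $\prod_{j=1}^d (\E{|X_j|^N})^{\nu_j/N}$. There is no real obstacle here: the proof is pure bookkeeping, and the only point to be careful about is that the Fubini-style swap of the two products is legitimate because everything is a finite product of nonnegative real numbers, and that the partition identity is applied per variable $j$ rather than across variables. This mirrors the univariate collapse argument in \Cref{lem:product_collapse}, with the Lyapunov step there replaced by the Hölder step in \Cref{lem:multi-holder}.
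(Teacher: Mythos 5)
Your proposal is correct and matches the paper's proof exactly: both apply \Cref{lem:multi-holder} blockwise, multiply over the blocks, and accumulate the exponents via the identity $\sum_{B\in\pi} n_{j,B} = \nu_j$. Your version merely spells out the product interchange and the harmlessness of the $n_{j,B}>0$ restriction, which the paper leaves implicit.
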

\begin{proof}
Apply \Cref{lem:multi-holder} to each block $B$ and multiply over $B\in\pi$. The exponent for $\E{|X_j|^N}$ accumulates as $\sum_{B\in\pi} n_{j,B}/N = \nu_j/N$, yielding the result.
\end{proof}

\subsection{Unified Multivariate Bounds}

The following theorem generalizes \Cref{thm:main}, \Cref{thm:central_sharp}, and \Cref{thm:symmetric-refinement} to the multivariate setting.

\begin{theorem}
\label{thm:multi-unified}
Let $\bm X=(X_1,\dots,X_d)$ be a random vector and $\bm\nu$ a multi-index with total order $N \ge 1$. Assume finite $N$-th absolute moments for all components. The joint cumulant satisfies the bound
\begin{equation}
\label{eq:multi-general-bound}
\abs{\kappa_{\bm\nu}(\bm X)} \le C_N \prod_{j=1}^d \left( M_N(X_j) \right)^{\nu_j/N},
\end{equation}
where the coefficient $C_N$ and the moment functional $M_N$ are determined by the structural properties of $\bm X$:
\begin{enumerate}[label=(\roman*), leftmargin=*]
    \item \textbf{Raw Moments:} For any $\bm X$, we have $C_N = \Craw{N}$ (as in \Cref{thm:main}) and $M_N(X) = \E{\abs{X}^N}$.
    \item \textbf{Central Moments:} If $N \ge 2$, we have $C_N = \Ccen{N}$ (as in \Cref{thm:central_sharp}) and $M_N(X) = \E{\abs{X-\E{X}}^N}$.
    \item \textbf{Symmetric Distributions:} If $\bm X \overset{d}= -\bm X$, then $\kappa_{\bm\nu}(\bm X)=0$ for odd $N$. For even $N$, we have $C_N = \Csym{N}$ (as in \Cref{thm:symmetric-refinement}) and $M_N(X) = \E{\abs{X}^N}$.
\end{enumerate}
\end{theorem}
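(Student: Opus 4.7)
The plan is to handle all three parts by a single template built around the multivariate partition representation \eqref{eq:multi-partition}: (a) identify the set of partitions whose terms survive under the structural hypothesis, (b) take absolute values and apply the triangle inequality, (c) bound every surviving block-expectation product by the common factor $\prod_{j=1}^d(M_N(X_j))^{\nu_j/N}$ using \Cref{lem:multi-collapse}, and (d) factor this product out of the sum to isolate a combinatorial residue matching the corresponding univariate coefficient from \Cref{sec:coefficient_analysis}.

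For part (i), no partitions are excluded. Applying steps (b)--(d) directly to \eqref{eq:multi-partition} with $M_N(X_j) = \E{\abs{X_j}^N}$ leaves the residue $\sum_{\pi\in\PP(N)}(\abs{\pi}-1)!$, which equals $\Craw{N} = 2\operatorname{Bell}(N-1)$ by \Cref{lem:coefficient_mass}.

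For part (ii), I would first invoke \Cref{rem:multi-shift} to replace each $X_j$ by its centered version $Y_j = X_j - \E{X_j}$, yielding $\kappa_{\bm\nu}(\bm X) = \kappa_{\bm\nu}(\bm Y)$ with corresponding moment $\E{\abs{Y_j}^N} = \E{\abs{X_j - \E{X_j}}^N}$. Since $\E{Y_j} = 0$, any partition containing a singleton block contributes zero, so the sum restricts to $\PP_{\ge 2}(N)$, and the template produces the residue $\Ccen{N}$. For part (iii), the symmetry $\bm X \overset{d}= -\bm X$ gives $\E{\prod_{k\in B} X_{\sigma(k)}} = (-1)^{\abs{B}}\E{\prod_{k\in B} X_{\sigma(k)}}$ for every block, forcing blocks of odd cardinality to kill their partition's contribution. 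Since block sizes sum to $N$, odd $N$ forces every partition to contain at least one odd-sized block and hence $\kappa_{\bm\nu}(\bm X) = 0$; for even $N$ the sum restricts to $\PP_{\mathrm{even}}(N)$, leaving the residue $\Csym{N}$.

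The substantive work is carried by \Cref{lem:multi-collapse}, which is already established; the remaining pieces are the two vanishing arguments and bookkeeping. The main subtlety is notational: after centering in (ii), the collapse must be applied to $\bm Y$ so that the moment factor is correctly the central moment $\E{\abs{X_j - \E{X_j}}^N}$ rather than the raw moment. Otherwise the proof is essentially three parallel applications of the same template, which is why the unified statement goes through cleanly.
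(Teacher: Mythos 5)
Your proposal is correct and follows essentially the same route as the paper: take absolute values of the multivariate partition formula \eqref{eq:multi-partition}, apply \Cref{lem:multi-collapse} to factor out $\prod_j (M_N(X_j))^{\nu_j/N}$ uniformly, and identify the surviving combinatorial mass with $\Craw{N}$, $\Ccen{N}$, or $\Csym{N}$. The paper's proof leaves the two vanishing arguments (singleton blocks under centering, odd blocks under symmetry) implicit under ``mutatis mutandis,'' whereas you spell them out correctly, which is a welcome addition rather than a deviation.
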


\begin{proof}
The proof follows the univariate argument mutatis mutandis. Taking the absolute value of \eqref{eq:multi-partition} yields
\[
\abs{\kappa_{\bm\nu}(\bm X)} \le \sum_{\pi \in \mathcal{S}} (\abs{\pi}-1)! \prod_{B\in\pi} \Biggl|\mathsf{E}\Bigl[\prod_{k \in B} \tilde{X}_{\sigma(k)}\Bigr]\Biggr|,
\]
where $\tilde{X}$ represents either the raw or centered variable, and $\mathcal{S} \subseteq \PP(N)$ is the subset of valid partitions. Applying \Cref{lem:multi-collapse} allows the product of moments to be factored out uniformly:
\[
\abs{\kappa_{\bm\nu}(\bm X)} \le \left( \sum_{\pi \in \mathcal{S}} (\abs{\pi}-1)! \right) \prod_{j=1}^d \bigl(\E{\abs{\tilde{X}_j}^N}\bigr)^{\nu_j/N}.
\]
The summation term corresponds exactly to the univariate coefficients $\Craw{N}$, $\Ccen{N}$, or $\Csym{N}$ derived in \Cref{sec:coefficient_analysis}, depending on whether $\mathcal{S}$ is the set of all partitions, partitions without singletons (due to centering), or partitions with even blocks (due to symmetry). In the symmetric case, note that for any block $B$ the monomial $\prod_{k\in B} X_{\sigma(k)}$ has total degree $|B|$; hence if $|B|$ is odd and $\bm X\overset d= -\bm X$, its expectation vanishes.
\end{proof}

\section{Non-attainment and Comparison with a Unit-Circle Benchmark}
\label{sec:strict}

In this section, we address the sharpness of the proposed bounds. We first demonstrate strictness whenever the relevant coefficient is positive, with the centered variance case $n=2$ as the unique identity case. We then compare our sharpest symmetric rate with the benchmark provided by the Rademacher law, which belongs to the extremal class $|X|=1$ a.s.

\subsection{Strictness of the Upper Bounds}

A natural question is whether there exists a worst-case distribution for which the bounds in \Cref{thm:main}, \Cref{thm:central_sharp}, or \Cref{thm:symmetric-refinement} become equalities. We show that, except in the centered variance identity and the degenerate symmetric odd-order zero-coefficient case, equality is impossible for non-degenerate laws.

First, we note the exceptional identity: for the centered refinement with $n=2$,
$\kappa_2(X) = \Var{X} = \E{\abs{X-\E{X}}^2},$
and since $\Ccen{2}=1$, the inequality $\abs{\kappa_2(X)} \le \Ccen{2}\,\E{\abs{X-\E{X}}^2}$ is an identity. In the symmetric odd-order case the coefficient is zero and the cumulant vanishes identically; outside these exceptional situations, the bounds are strict whenever the relevant coefficient is positive.

\begin{proposition}
\label{prop:strictness}
Let $X$ be a non-degenerate random variable (i.e., $X$ is not almost surely constant).
\begin{enumerate}[label=(\roman*), leftmargin=*, itemsep=2pt]
  \item For the raw-moment bound (\Cref{thm:main}), if $n\ge 2$, then $\abs{\kappa_n(X)} < \Craw{n}\,\E{\abs{X}^n}$.
  \item For the universal central bound (\Cref{thm:central_sharp}), if $n\ge 3$, then $\abs{\kappa_n(X)} < \Ccen{n}\,\E{\abs{X-\E{X}}^n}$.
  \item For the symmetric refinement (\Cref{thm:symmetric-refinement}), if $X$ is symmetric and $n\ge 4$ is even, then $\abs{\kappa_n(X)} < \Csym{n}\,\E{\abs{X}^{n}}$.
\end{enumerate}
\end{proposition}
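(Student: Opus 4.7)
The plan is to trace the three inequality steps used in the proofs of \Cref{thm:main}, \Cref{thm:central_sharp}, and \Cref{thm:symmetric-refinement}, and to show that for non-degenerate $X$ at least one of them must be strict. These steps are: (T) the triangle inequality applied to the signed partition sum, which is tight only if all nonzero signed summands share a common sign; (J) the pointwise estimate $\abs{m_k}\le \mu_k$, which is tight iff $X^k$ has constant sign almost surely; and (L) Lyapunov's inequality $\mu_k\le \mu_n^{k/n}$ used in \Cref{lem:product_collapse}, which is tight iff $\abs{X}$ is almost surely constant. For each statement I would pick a small witness family of partitions on which at least one of (T), (J), (L) is forced to be strict.

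For part (i), the witness is the all-singletons partition $\pi^\ast=\{\{1\},\ldots,\{n\}\}\in\PP(n)$, which exists for $n\ge 2$. Equality throughout the chain at $\pi^\ast$ would force simultaneously $\abs{m_1}=\mu_1$ (so $X$ has constant sign) and $\mu_1^n=\mu_n$ (so $\abs{X}$ is almost surely constant); together these give $X$ almost surely constant, contradicting non-degeneracy.

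For part (ii), let $Y=X-\E{X}$, so $\E{Y}=0$ and only partitions in $\PP_{\ge 2}(n)$ contribute. When $n=3$, $\PP_{\ge 2}(3)$ contains only $\{[3]\}$, so the bound reduces to $\abs{\kappa_3(X)}=\abs{m_3(Y)}\le \mu_3(Y)$; tightness of (J) would force $Y$ of constant sign, and combined with $\E{Y}=0$ this yields $Y\equiv 0$. For $n\ge 4$ the witness is $\pi^\ast=\{\{1,2\},\{3,\ldots,n\}\}\in\PP_{\ge 2}(n)$. Tightness of (L) at $\pi^\ast$ forces $\abs{Y}\equiv c$ for some $c>0$, and $\E{Y}=0$ then pins $Y$ to the two-point law taking values $\pm c$ each with probability $1/2$; on this law the odd moments of $Y$ vanish while $m_{2j}(Y)=c^{2j}$. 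For odd $n\ge 5$, every partition in $\PP_{\ge 2}(n)$ contains at least one odd block (since $n$ is odd), so every product vanishes and $\kappa_n(X)=0<\Ccen{n}\,c^n$. For even $n\ge 4$, the partitions $\{[n]\}$ and $\pi^\ast$ contribute $+c^n$ and $-c^n$ respectively to the signed sum, so opposite signs appear and (T) must be strict. Part (iii) is handled by the same witness: $\pi^\ast=\{\{1,2\},\{3,\ldots,n\}\}$ lies in $\PP_{\mathrm{even}}(n)$ for even $n\ge 4$; tightness of (L) again forces $\abs{X}\equiv c$, symmetry pins the $\pm c$ masses at $1/2$, and the sign mismatch between $\{[n]\}$ and $\pi^\ast$ in the signed sum again forces (T) to be strict.

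The main obstacle is precisely the degenerate-support case where $\abs{X}$ (or $\abs{Y}$) is already almost surely constant: then (J) at even indices and (L) are tight automatically, and strictness must come solely from (T). The essential combinatorial content of the proof is therefore to exhibit, for every $n$ in the scope of each statement, a pair of partitions in the relevant family whose signed contributions carry opposite signs; the pair $\bigl(\{[n]\},\,\{\{1,2\},\{3,\ldots,n\}\}\bigr)$ serves in all cases considered, and the remaining steps are a routine equality analysis of Jensen's and Lyapunov's inequalities.
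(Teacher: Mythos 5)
Your proof is correct and follows the same skeleton as the paper's: both identify the three-inequality chain (triangle inequality, $\abs{m_k}\le\mu_k$, Lyapunov product collapse) and show that equality cannot hold in all three simultaneously for non-degenerate $X$. Where you differ is in execution, and your version is actually tighter. The paper argues only that (L) is strict unless $\abs{X}$ is a.s.\ constant, and then asserts that in the constant-modulus case the triangle inequality must fail because the terms $m_n$ and $m_{n-k}m_k$ are ``typically non-zero'' with alternating-sign coefficients; this is not airtight --- for a Rademacher variable and odd $n$ in case (i), \emph{every} product of moments vanishes, so the triangle inequality holds with equality ($0\le 0$) and the strictness in fact comes from $\abs{m_1}<\mu_1$. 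Your witness-partition argument closes exactly this gap: in (i) the all-singletons partition forces both constant sign and constant modulus, hence degeneracy; in (ii) and (iii) the pair $\bigl(\{[n]\},\{\{1,2\},\{3,\dots,n\}\}\bigr)$ pins the law down to the symmetric two-point distribution, on which the two contributions $+c^n$ and $-c^n$ are explicitly nonzero and of opposite sign, and the odd-$n$ case is disposed of by parity. The only point worth stating explicitly is that term-wise equality across \emph{all} partitions is needed because every weight $(\abs{\pi}-1)!$ is strictly positive, which is what licenses restricting attention to a single witness partition.
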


\begin{proof}
We treat the three bounds separately.

\smallskip
\noindent
\emph{(i) Raw bound.}
Let
\[
S_n(X):=\sum_{\pi\in\PP(n)}(-1)^{|\pi|-1}(|\pi|-1)!\prod_{B\in\pi}m_{|B|},
\]
so that $S_n(X)=\kappa_n(X)$. The proof of \Cref{thm:main} bounds $\abs{S_n(X)}$ by applying:
\begin{enumerate}[label=(\alph*), leftmargin=*]
    \item the triangle inequality;
    \item the estimate $\abs{m_k}\le \mu_k$;
    \item the product collapse \Cref{lem:product_collapse}.
\end{enumerate}

Suppose first that $\abs{X}$ is not almost surely constant. Then equality in Lyapunov's inequality
fails for every pair of distinct orders by \Cref{lem:lyapunov-equality}, hence for every $1\le j<n$,
$\mu_j<\mu_n^{j/n}.$
Therefore, for every partition $\pi$ with at least two blocks,
$\prod_{B\in\pi}\mu_{|B|}<\mu_n.$
Since such partitions occur for every $n\ge2$, step (c) is strict for at least one term appearing in
the coefficient mass, so equality in the final bound is impossible.

It remains to consider the case $\abs{X}=c>0$ almost surely. Since $X$ is real-valued and non-degenerate,
necessarily $X\in\{-c,c\}$ almost surely.

If $n=2$, then
$\abs{\kappa_2(X)}=\Var{X}<c^2<2c^2=\Craw{2}\,\E{\abs{X}^2},$
because $\E{\abs{X}^2}=c^2$ and non-degeneracy implies $\Var{X}<c^2$.

Now assume $n\ge3$. If $n$ is odd and $X$ is symmetric, then $\kappa_n(X)=0$, so the inequality is strict.
If $n$ is odd and $X$ is not symmetric, then $m_1\neq0$ and $m_{n-1}=c^{n-1}>0$. In the partition formula,
the one-block partition contributes the nonzero term $m_n$, while every partition of type $(1,n-1)$
contributes the nonzero term $-m_1m_{n-1}$. These two terms have opposite signs, so equality in the
triangle inequality is impossible.

If $n$ is even, then $m_2=c^2>0$ and $m_{n-2}=c^{n-2}>0$. The one-block partition contributes $m_n$,
while every partition of type $(2,n-2)$ contributes $-m_2m_{n-2}$. Again the signs are opposite, so
equality in the triangle inequality is impossible. This proves strictness in the raw case.

\smallskip
\noindent
\emph{(ii) Centered bound.}
Let $Y:=X-\E{X}$. Then $\kappa_n(X)=\kappa_n(Y)$ for all $n\ge2$, and \Cref{thm:central_sharp} is obtained by
applying the same three steps as above to the restricted partition sum over partitions with no singletons.

If $n=3$, then the only partition of $[3]$ with no singleton blocks is the one-block partition, so
$\kappa_3(Y)=m_3(Y).$
Hence
$\abs{\kappa_3(Y)}=\abs{\E{Y^3}}\le \E{\abs{Y}^3}.$
Equality in $\abs{\E{Y^3}}=\E{\abs{Y}^3}$ would force $Y^3$ to have constant sign almost surely, hence
$Y$ to be almost surely nonnegative or almost surely nonpositive. Since $\E{Y}=0$ and $Y$ is non-degenerate,
this is impossible. Therefore the inequality is strict for $n=3$.

Now let $n\ge4$. If $\abs{Y}$ is not almost surely constant, then as above
$\prod_{B\in\pi}\E{\abs{Y}^{|B|}}<\E{\abs{Y}^n}$
for every partition $\pi$ with at least two blocks. Since there exist partitions of $[n]$ with no singleton
blocks and at least two blocks for every $n\ge4$ (for instance, a partition of type $(2,n-2)$), step (c) is
strict for at least one admissible term, so equality in the final bound is impossible.

If instead $\abs{Y}=c>0$ almost surely, then since $\E{Y}=0$ and $Y$ is real-valued, necessarily
$\Pr{Y=c}=\Pr{Y=-c}=\frac12,$
so $Y$ is symmetric. For odd $n$, we have $\kappa_n(Y)=0$, hence strictness is immediate. For even $n\ge4$,
the one-block partition contributes $m_n(Y)=c^n$, while every partition of type $(2,n-2)$ contributes
$-m_2(Y)m_{n-2}(Y)=-c^n$. These nonzero terms have opposite signs, so equality in the triangle inequality
is impossible. This proves strictness in the centered case for all $n\ge3$.

\smallskip
\noindent
\emph{(iii) Symmetric bound.}
Assume now that $X$ is symmetric and $n\ge4$ is even. If $\abs{X}$ is not almost surely constant, then
the same strict Lyapunov argument shows that step (c) is strict for every admissible partition with at least
two blocks, and such partitions exist for every even $n\ge4$ (for instance, of type $(2,n-2)$).

If instead $\abs{X}=c>0$ almost surely, then symmetry implies that $X$ is the Rademacher law on $\{\pm c\}$.
The one-block partition contributes $m_n=c^n$, while every partition of type $(2,n-2)$ contributes
$-m_2m_{n-2}=-c^n$. These nonzero terms have opposite signs, so equality in the triangle inequality
is impossible. Thus the symmetric bound is strict as well.
\end{proof}

\subsection{Comparison with a unit-circle benchmark}

While the bounds are strict, it is natural to compare their exponential rates with explicit distributions for which the cumulants can be analyzed exactly. The cleanest benchmark comes from laws with unit absolute moments.

\begin{definition}
Let $\mathcal{M}_1$ be the class of scalar random variables such that $\E{\abs{X}^n} = 1$ for all integers $n \ge 1$.
\end{definition}

\begin{lemma}
If $X \in \mathcal{M}_1$, then $\abs{X} = 1$ almost surely.
\end{lemma}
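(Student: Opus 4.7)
The plan is to exploit only the first two members of the defining sequence, namely $\mu_1 = \E{\abs{X}} = 1$ and $\mu_2 = \E{\abs{X}^2} = 1$, and to package them into a single non-negative quantity whose expectation vanishes. The natural candidate is the perfect square $(\abs{X}-1)^2$, since expanding it in terms of $\mu_1$ and $\mu_2$ should produce exact cancellation.

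Concretely, I would argue as follows. By non-negativity, $(\abs{X}-1)^2 \ge 0$ almost surely. Expanding and using linearity gives
\[
\E{(\abs{X}-1)^2} \;=\; \E{\abs{X}^2} - 2\E{\abs{X}} + 1 \;=\; \mu_2 - 2\mu_1 + 1 \;=\; 1 - 2 + 1 \;=\; 0.
\]
A non-negative random variable with zero expectation is zero almost surely, so $(\abs{X}-1)^2 = 0$ a.s., equivalently $\abs{X} = 1$ a.s. Note that the defining condition is used only for $n=1$ and $n=2$; the hypotheses for $n \ge 3$ are automatically satisfied once $\abs{X} \equiv 1$.

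There is no substantial obstacle here: the argument is a one-line second-moment identity, and the only thing worth flagging is that the variance formulation $\Var(\abs{X}) = \mu_2 - \mu_1^2 = 0$ gives the same conclusion through the standard fact that a random variable of zero variance is a.s. constant, with the value of the constant pinned down by $\E{\abs{X}} = 1$.
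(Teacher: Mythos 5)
Your proof is correct and is essentially the same argument as the paper's: the paper computes $\Var{\abs{X}} = \mu_2 - \mu_1^2 = 0$ and concludes $\abs{X}$ is a.s.\ constant equal to $1$, which (since $\mu_1 = 1$) is exactly your identity $\E{(\abs{X}-1)^2} = \mu_2 - 2\mu_1 + 1 = 0$ in a different packaging. No issues.
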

\begin{proof}
Consider the variance of the random variable $Y = \abs{X}$. By the definition of $\mathcal{M}_1$, we have $\E{Y} = \E{\abs{X}} = 1$ and $\E{Y^2} = \E{\abs{X}^2} = 1$. Thus,
$\Var{Y} = \E{Y^2} - (\E{Y})^2 = 1 - 1^2 = 0.$
A random variable with zero variance is almost surely constant. Hence $Y = 1$ a.s., which implies $\abs{X} = 1$ a.s.
\end{proof}

For this class of variables, we can determine the exact growth rate of the cumulants in a concrete example.

\begin{example}
Let $X$ be a symmetric random variable in $\mathcal{M}_1$. The distribution of $X$ is uniquely determined as the Rademacher law: $\Pr{X=1}=\Pr{X=-1}=1/2$. The cumulant generating function is
$K_X(t) = \log \E{e^{tX}} = \log\left(\frac{e^t + e^{-t}}{2}\right) = \log(\cosh t).$
Since $X$ is symmetric, $\kappa_n(X) = 0$ for odd $n$. For even $n$, the cumulants are related to the tangent numbers (or Bernoulli numbers). However, their asymptotic growth is most easily derived via the singularities of $K_X(t)$.

The function $\log(\cosh t)$ is analytic everywhere in the complex plane except where $\cosh t = 0$. The zeros of $\cosh t$ are located at $t_k = i \frac{\pi}{2} (2k+1)$ for $k \in \mathbb{Z}$. The singularities closest to the origin are the logarithmic singularities of $K_X$ (equivalently, the simple poles of $K_X'(t)=\tanh t$) at $t_0 = i\pi/2$ and $\bar{t}_0 = -i\pi/2$. The radius of convergence of the Taylor series is therefore exactly $R = \pi/2$.

Near the singularity $t_0 = i\pi/2$, we have the behavior
$\cosh(t) = \cosh(i\pi/2 + (t-t_0)) = i \sinh(t-t_0) \approx i(t-t_0).$
Thus, the singular part of the generating function behaves as
$K_X(t) \sim \log(t - t_0) = \log\left(1 - \frac{t}{t_0}\right) + C.$
Applying standard singularity analysis as above, the coefficient
$[t^n]K_X(t)=\frac{\kappa_n}{n!}$
satisfies
\[
  \frac{\kappa_n}{n!}
  \sim
  -\frac{1}{n}\left(t_0^{-n}+\bar t_0^{-n}\right).
\]
For even $n=2m$, the contributions from the conjugate singularities add to
\[
  \frac{\kappa_{2m}}{(2m)!}
  \sim
  -\frac{2}{2m}\left(\frac{2}{\pi}\right)^{2m}(-1)^m
  =
  (-1)^{m+1}\frac{2}{2m}\left(\frac{2}{\pi}\right)^{2m}.
\]
Taking absolute values and rearranging yields
\begin{equation}
\label{eq:rademacher_rate}
  \abs{\kappa_{2m}(X)} \sim 2 (2m-1)! \left( \frac{1}{\pi/2} \right)^{2m}.
\end{equation}
\end{example}

We can now compare this exact rate with our symmetric bound from \Cref{thm:symmetric-refinement}. Our bound states that $\abs{\kappa_{2m}} \le \Csym{2m} \E{\abs{X}^{2m}}$, where $\E{\abs{X}^{2m}}=1$. The coefficient behaves as
\[
  \Csym{2m} \sim 2 (2m-1)! \left( \frac{1}{\rho_{\mathrm{sym}}} \right)^{2m}, \quad \text{with } \rho_{\mathrm{sym}} = \operatorname{arcosh} 2 \approx 1.317.
\]
Comparing this to \eqref{eq:rademacher_rate}, we see that the structural forms are identical (both scale as factorials), but the exponential rates differ. The ratio of the base parameters is
\[
  \frac{\rho_{\mathrm{sym}}}{\pi/2} \approx \frac{1.317}{1.571} \approx 0.838.
\]
Thus the sharpest universal symmetric constant in this paper captures the Rademacher benchmark up to a factor of about $0.84$ in the exponential rate.

The following theorem gives a general zero-free disk for probability measures supported on the unit circle. It does not identify the optimal universal radius over that class; we use it only as a benchmark result.

In the following theorem we temporarily allow the random variable to be complex-valued, supported on the unit circle
$\mathbb{T}=\{z\in\mathbb{C} : \abs{z}=1\}.$

\begin{theorem}
\label{thm:zero_free_disk}
Let $\mu$ be any probability measure on the unit circle $\mathbb{T} = \{z\in\mathbb{C} : \abs{z}=1\}$. The entire function
$M(t) = \int_{\mathbb{T}} e^{t z} \, d\mu(z)$
has no zeros in the open disk $D = \{t\in\mathbb{C} : \abs{t} < \pi/2\}$. Consequently, the Taylor series at $0$ of the analytic branch of $\log M(t)$ has radius of convergence at least $\pi/2$.
\end{theorem}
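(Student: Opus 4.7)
The plan is to prove the stronger statement $\operatorname{Re} M(t) > 0$ for every $t$ with $|t| < \pi/2$, from which $M(t) \ne 0$ follows at once. Since $|e^{tz}| \le e^{|t|}$ uniformly in $z \in \mathbb{T}$, dominated convergence shows $M$ is entire; once the zero-free disk is established, $M$ admits a holomorphic logarithm on the simply connected disk $\{|t|<\pi/2\}$ with $\log M(0) = 0$, and this branch coincides with the Taylor series of the cumulant generating function about the origin, giving the claimed lower bound on its radius of convergence.

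For the real-part estimate, I would parametrize $t = u + iv$ and $z = x + iy$ with $x^2 + y^2 = 1$. A direct calculation gives $tz = (ux - vy) + i(uy + vx)$, so
\[
\operatorname{Re} e^{tz} = e^{ux-vy}\cos(uy + vx).
\]
The exponential factor is strictly positive, and by Cauchy--Schwarz,
\[
|uy + vx| \;\le\; \sqrt{u^2+v^2}\,\sqrt{x^2+y^2} \;=\; |t| \;<\; \pi/2,
\]
so $\cos(uy + vx) > 0$ uniformly in $z \in \mathbb{T}$. The integrand $\operatorname{Re} e^{tz}$ is therefore strictly positive at every point of the circle, and integrating against the probability measure $\mu$ yields $\operatorname{Re} M(t) > 0$, which is the desired nonvanishing.

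I do not anticipate a genuine obstacle here: the entire argument collapses to a one-line Cauchy--Schwarz estimate once one commits to controlling $\operatorname{Re} M$ rather than $|M|$. The only point worth flagging explicitly is the sharpness of the radius $\pi/2$: the Cauchy--Schwarz bound $|uy+vx| \le |t|$ is attained (for instance, with $t = i\pi/2$ and $z = 1$ one has $tz = i\pi/2$, exactly on the boundary of sign-positivity), and the Rademacher case $M(t) = \cosh t$ already vanishes at $t = i\pi/2$, so the disk cannot be enlarged by any argument that only sees $|t|$. The analyticity-to-radius deduction is standard and requires no ingredient beyond the existence of a holomorphic logarithm on a simply connected zero-free domain.
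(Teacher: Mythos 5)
Your proof is correct and is essentially the paper's argument: both reduce to the observation that $\Re(e^{tz}) = e^{\Re(tz)}\cos(\Im(tz)) > 0$ because $|\Im(tz)| \le |tz| = |t| < \pi/2$, the paper phrasing the conclusion via the convex hull of the image curve lying in the right half-plane while you integrate the positive real part directly. Your added remarks on the holomorphic logarithm and on sharpness at $t = i\pi/2$ (the Rademacher case) are consistent with the paper's surrounding discussion.
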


\begin{proof}
Fix $t \in \mathbb{C}$ with $\abs{t} < \pi/2$. We define the image of the unit circle under the map $z \mapsto e^{tz}$ as the set $\Gamma_t = \{e^{t z} : z\in\mathbb{T}\}$. Since $M(t)$ is the center of mass of $\Gamma_t$ with respect to $\mu$, it lies in the closed convex hull of $\Gamma_t$. To prove $M(t) \neq 0$, it suffices to show that the origin $0$ does not lie in $\mathrm{conv}(\Gamma_t)$.

Let $w = tz$. Since $\abs{z}=1$, we have $\abs{w} = \abs{t} < \pi/2$. Writing $w = u + iv$, the condition $\abs{w} < \pi/2$ implies $\abs{v} < \pi/2$. The real part of a point on the curve is
$\Re(e^w) = e^u \cos(v).$
Since $\abs{v} < \pi/2$, we have $\cos(v) > 0$. Further, $e^u > 0$, so $\Re(e^w) > 0$ for all $w$ such that $\abs{w} < \pi/2$. Geometrically, this means the entire set $\Gamma_t$ is contained strictly within the right half-plane $H = \{ \zeta \in \mathbb{C} : \Re(\zeta) > 0 \}$. The convex hull of a set contained in an open half-plane is also contained in that half-plane. Since $0 \notin H$, it follows that $0 \notin \mathrm{conv}(\Gamma_t)$, and thus $M(t) \neq 0$.

Since $M(0)=1$ and $M$ has no zeros in the simply connected disk $\abs{t}<\pi/2$,
there exists an analytic branch of $\log M(t)$ there. Hence its Taylor series at the origin
has radius of convergence at least $\pi/2$.
\end{proof}

\begin{remark}
\label{rem:info_gap}
This comparison indicates the loss of information inherent in bounding cumulants using only a single absolute moment. The symmetric combinatorial rate $\rho_{\mathrm{sym}} \approx 1.317$ is below the Rademacher benchmark $\pi/2 \approx 1.571$. The gap reflects the loss caused by the two universal relaxations in the proof: the product collapse (\Cref{lem:product_collapse}), which discards the fine structure of lower-order moments, and the triangle inequality, which ignores the algebraic cancellations characterizing cumulants.
\end{remark}

\section{Applications}
\label{sec:applications}

In this section, we illustrate how the centered bound controls standardized cumulants of independent sums.

\subsection{Standardized Cumulants of Independent Sums}
\label{subsec:independent_sums}

A natural statistical use of the centered bound is to control standardized cumulants of independent sums, placing the theory in the natural scale for Edgeworth expansions and higher-order limit theorems.

\begin{proposition}
\label{prop:independent_sums}
Let \(r\ge 2\), and let \(X_1,\dots,X_N\) be independent real-valued random variables such that
\[
\E{\abs{X_i-\E{X_i}}^r} < \infty
\qquad\text{and}\qquad
\Var{X_i} < \infty
\quad \text{for } 1\le i\le N.
\]
Define
\[
S_N \coloneqq \sum_{i=1}^N \bigl(X_i-\E{X_i}\bigr),
\qquad
V_N \coloneqq \sum_{i=1}^N \Var{X_i},
\qquad
Z_N \coloneqq \frac{S_N}{\sqrt{V_N}}.
\]
If \(V_N>0\), then
\[
\abs{\kappa_r(Z_N)}
\le
\Ccen{r}\,V_N^{-r/2}
\sum_{i=1}^N \E{\abs{X_i-\E{X_i}}^r}.
\]
\end{proposition}

\begin{proof}
Let \(Y_i \coloneqq X_i-\E{X_i}\). Then the \(Y_i\) are independent and centered, and
$Z_N = V_N^{-1/2}\sum_{i=1}^N Y_i.$
By additivity of cumulants over independent sums and homogeneity,
$\kappa_r(Z_N)
=
V_N^{-r/2}\sum_{i=1}^N \kappa_r(Y_i).$
Therefore,
\[
\abs{\kappa_r(Z_N)}
\le
V_N^{-r/2}\sum_{i=1}^N \abs{\kappa_r(Y_i)}
\le
\Ccen{r}\,V_N^{-r/2}\sum_{i=1}^N \E{\abs{Y_i}^r},
\]
where the last step follows from \Cref{cor:centered-refinement}.
\end{proof}

\begin{remark}
\label{rem:lyapunov_fraction}
It is convenient to write
$L_{N,r} \coloneqq V_N^{-r/2}\sum_{i=1}^N \E{\abs{X_i-\E{X_i}}^r}.$
Then \Cref{prop:independent_sums} becomes $\abs{\kappa_r(Z_N)} \le \Ccen{r}\,L_{N,r}$. Thus the same Lyapunov fraction that appears in classical limit theorems also controls the standardized \(r\)-th cumulant with an explicit universal constant.
\end{remark}

\begin{corollary}
\label{cor:iid_standardized_cumulants}
Let \(X_1,\dots,X_N\) be i.i.d.\ with mean \(\mu\), variance \(\sigma^2\in(0,\infty)\), and \(\E{\abs{X_1-\mu}^r}<\infty\). Define $Z_N \coloneqq \frac{1}{\sigma\sqrt{N}}\sum_{i=1}^N (X_i-\mu)$. Then
\[
\abs{\kappa_r(Z_N)}
\le
\Ccen{r}\,N^{1-r/2}\,
\frac{\E{\abs{X_1-\mu}^r}}{\sigma^r}.
\]
\end{corollary}

\begin{proof}
This follows immediately from \Cref{prop:independent_sums} with $V_N=N\sigma^2$.
\end{proof}

\begin{corollary}
\label{cor:sample_mean_cumulants}
Under the assumptions of \Cref{cor:iid_standardized_cumulants}, the sample mean $\bar X_N \coloneqq \frac{1}{N}\sum_{i=1}^N X_i$ satisfies
\[
\abs{\kappa_r(\bar X_N)}
\le
\Ccen{r}\,N^{1-r}\,\E{\abs{X_1-\mu}^r}.
\]
\end{corollary}

\begin{proof}
Since
$\bar X_N-\mu = \frac{\sigma}{\sqrt{N}} Z_N,$
homogeneity gives
\[
\kappa_r(\bar X_N-\mu)
=
\left(\frac{\sigma}{\sqrt{N}}\right)^r \kappa_r(Z_N).
\]
Because \(r\ge2\), cumulants are shift invariant, so
$\kappa_r(\bar X_N)=\kappa_r(\bar X_N-\mu).$
Substituting the bound from \Cref{cor:iid_standardized_cumulants} yields the result.
\end{proof}

\begin{remark}
For every fixed \(r\ge 3\), \Cref{cor:iid_standardized_cumulants} yields the explicit decay $\abs{\kappa_r(Z_N)} = O\bigl(N^{1-r/2}\bigr)$. This is the standard scaling governing higher-order standardized cumulants in Edgeworth-type expansions.
\end{remark}

\section{Discussion and Conclusion}
\label{sec:discussion}

Determining explicit bounds for high-order cumulants is a recurring problem in probability theory, particularly when precise non-asymptotic control is required. In this work, we established a unified framework that bounds the $n$-th cumulant $\kappa_n(X)$ solely in terms of an $n$-th absolute moment $M_n(X)$. By identifying the bounding coefficient with the exact combinatorial mass of the partition formula, we derived constants scaling as $(n-1)! / \rho^n$, offering a decisive exponential improvement over the $n^n$ scaling typical of crude combinatorial bounds.

\paragraph{The Combinatorial Dictionary.}
The power of this framework lies in its adaptability. By mapping distributional properties (such as centering or symmetry) to constraints on set partitions (no singletons, or no blocks of odd size), we transformed the estimation problem into the combinatorial problem of enumerating restricted partitions. As summarized in \Cref{tab:summary} and \Cref{tab:explicit_values}, this provides a precise dictionary translating statistical assumptions into quantitative improvements of the exponential rate $\rho$.

\paragraph{The Price of Universality.}
As demonstrated in \Cref{sec:strict}, the bounds are strict whenever the relevant coefficient is positive; the only identity occurs in the centered variance case $n=2$, while in symmetric odd orders the coefficient is zero and the cumulant vanishes identically. The comparison with the Rademacher law shows that the sharpest symmetric rate in this paper remains below the explicit benchmark $\pi/2$ arising from $\log(\cosh t)$. This discrepancy reflects the loss of information caused by the two universal relaxations in our proof: by applying the triangle inequality and a uniform product collapse (\Cref{lem:product_collapse}), we discard the fine structure of lower-order moments and the algebraic cancellations native to cumulants. This suggests that improving the exponential rate beyond $\rho_{\mathrm{sym}}$ would require additional information beyond a single absolute moment, such as explicit control of lower-order moments or cancellations within the partition formula.

\paragraph{Concluding Remarks.}
Ultimately, these universal bounds sharpen the most general moment-based control of cumulants without imposing structural or analytic assumptions beyond finiteness of the relevant moment. Because they depend only on a single absolute moment of the same order, they remain applicable in settings where moment generating functions may fail to exist. By providing concrete, readily computable constants, the framework offers a useful tool for fixed-order non-asymptotic analysis and for quantitative questions involving standardized cumulants and limit theorems.

\bibliographystyle{alpha}
\bibliography{ref}

@incollection{doring2022method,
  title={The method of cumulants for the normal approximation},
  author={D{\"o}ring, Hanna and Eichelsbacher, Peter},
  booktitle={Modern Problems of Stochastic Analysis and Statistics. SPASS 2021},
  pages={151--186},
  year={2023},
  publisher={Springer}
}

@book{ProkhorovRozanov1969_en,
  author    = {Prokhorov, Yuriy V. and Rozanov, Yuriy A.},
  title     = {Probability Theory: Basic Concepts, Limit Theorems, Random Processes},
  publisher = {Springer-Verlag},
  address   = {Berlin, Heidelberg},
  year      = {1969},
  series    = {Die Grundlehren der mathematischen Wissenschaften in Einzeldarstellungen},
  volume    = {157},
  doi       = {10.1007/978-3-642-88076-9}
}

@article{dubkov1976properties,
  title={Properties and interdependence of the cumulants of a random variable},
  author={Dubkov, AA and Malakhov, AN},
  journal={Radiophysics and Quantum Electronics},
  volume={19},
  number={8},
  pages={833--839},
  year={1976},
  publisher={Springer}
}

@misc{pinelis2020bounds,
  author       = {Pinelis, Iosif},
  title        = {Bounds on cumulants in terms of moments},
  howpublished = {MathOverflow},
  year         = {2020},
  month        = {Aug},
  note         = {URL: \url{https://mathoverflow.net/q/368761} (version: 2020-08-10)},
  urldate      = {2025-09-16}
}

@misc{jones2023bound,
  author       = {Jones, Chris},
  title        = {Bound for cumulants of bounded random variables},
  howpublished = {Math.StackExchange},
  year         = {2023},
  month        = {Oct},
  note         = {URL: \url{https://math.stackexchange.com/q/3135549} (version: 2023-10-27)},
  urldate      = {2025-09-16}
}

@article{speed1983cumulants,
  title={Cumulants and partition lattices},
  author={Speed, T. P.},
  journal={Australian \& New Zealand Journal of Statistics},
  volume={25},
  number={2},
  pages={378--388},
  year={1983},
  publisher={Wiley Online Library}
}

@book{mccullagh1987tensor,
  title={Tensor Methods in Statistics},
  author={McCullagh, Peter},
  year={1987},
  publisher={Chapman and Hall/CRC},
  series={Monographs on Statistics and Applied Probability}
}

@article{statulevivcius1966large,
  title={On large deviations},
  author={Statulevi{\v{c}}ius, VA},
  journal={Zeitschrift f{\"u}r Wahrscheinlichkeitstheorie und verwandte Gebiete},
  volume={6},
  number={2},
  pages={133--144},
  year={1966},
  publisher={Springer}
}

@article{leonov1959method,
  author    = {Leonov, V. P. and Shiryaev, A. N.},
  title     = {On a method of calculation of semi-invariants},
  journal   = {Theory of Probability and its Applications},
  volume    = {4},
  number    = {3},
  pages     = {319--329},
  year      = {1959}
}

@book{stoyanov2013counterexamples,
  title={Counterexamples in probability},
  author={Stoyanov, Jordan M},
  year={2013},
  publisher={Dover Publications}
}

@article{kleiber2013multivariate,
  title={Multivariate distributions and the moment problem},
  author={Kleiber, Christian and Stoyanov, Jordan},
  journal={Journal of Multivariate Analysis},
  volume={113},
  pages={7--18},
  year={2013},
  publisher={Elsevier}
}

@article{lin2002moment,
  title={On the moment determinacy of the distributions of compound geometric sums},
  author={Lin, Gwo Dong and Stoyanov, Jordan},
  journal={Journal of Applied probability},
  volume={39},
  number={3},
  pages={545--554},
  year={2002},
  publisher={Cambridge University Press}
}

@article{pinelis2015rosenthal,
author = {Iosif Pinelis},
title = {{Exact Rosenthal-type bounds}},
volume = {43},
journal = {The Annals of Probability},
number = {5},
publisher = {Institute of Mathematical Statistics},
pages = {2511 -- 2544},
keywords = {bounds on moments, calculus of variations, Infinitely divisible distributions, Lévy characteristics, Probability inequalities, Rosenthal inequality, Sums of independent random variables},
year = {2015},
doi = {10.1214/14-AOP942},
URL = {https://doi.org/10.1214/14-AOP942}
}

@article{pinelis1994optimum,
  title={Optimum bounds for the distributions of martingales in Banach spaces},
  author={Pinelis, Iosif},
  journal={The Annals of Probability},
  pages={1679--1706},
  year={1994},
  publisher={JSTOR}
}

@article{pinelis2007asymmetric,
  title={Exact inequalities for sums of asymmetric random variables, with applications},
  author={Pinelis, Iosif},
  journal={Probability Theory and Related Fields},
  volume={139},
  number={3},
  pages={605--635},
  year={2007},
  publisher={Springer}
}

@article{stoyanov2020nonconventional,
  title={Nonconventional limits of random sequences related to partitions of integers},
  author={Stoyanov, Jordan and Vignat, Christophe},
  journal={Proceedings of the American Mathematical Society},
  volume={148},
  number={4},
  pages={1791--1804},
  year={2020}
}

@article{stoyanov2020new,
  title={New checkable conditions for moment determinacy of probability distributions},
  author={Stoyanov, Jordan M and Lin, Gwo Dong and Kopanov, Peter},
  journal={Theory of Probability \& Its Applications},
  volume={65},
  number={3},
  pages={497--509},
  year={2020},
  publisher={SIAM}
}

@article{utev1989cumulants,
  title={Cumulants and moment inequalities},
  author={Utev, Sergei Aleksandrovich},
  journal={Theory of Probability \& Its Applications},
  volume={34},
  number={4},
  pages={742--747},
  year={1989},
  publisher={SIAM}
}

@book{flajolet2009analytic,
  title={Analytic Combinatorics},
  author={Flajolet, Philippe and Sedgewick, Robert},
  year={2009},
  publisher={Cambridge University Press}
}

\end{document}